\newtheorem{theorem}{Theorem}
\newtheorem{definition}[theorem]{Definition}
\newtheorem{proposition}[theorem]{Proposition}
\newtheorem{corollary}[theorem]{Corollary}
\newtheorem*{proposition*}{Proposition}
\title{
		\vspace{-1in} 	
		\usefont{OT1}{bch}{b}{n}
		\normalfont \normalsize \textsc{} \\ [25pt]
		\huge  Rolling and no-slip bouncing in cylinders
}
\date{}
\author{\normalfont \large 
 T. Chumley\footnote{\scriptsize Department of Mathematics and Statistics, Mount Holyoke College, 50 College St, South Hadley, MA 01075},
\ S. Cook\footnote{\scriptsize Department of Mathematics, Tarleton State University, Box T-0470, Stephenville, TX 76401},
\ C. Cox\footnotemark[2],
\  R. Feres\footnote{\scriptsize Department of Mathematics and Statistics, Washington University, Campus Box 1146, St. Louis, MO 63130}
}
 \date{\today}
\begin{document}

\maketitle

\begin{abstract}
\begin{center}
 Abstract \end{center}
{\small  
The purpose of this paper is to compare a  classical non-holonomic  system\----a sphere rolling against the inner surface of a  vertical cylinder under gravity\----and 
a class of discrete dynamical systems   known as {\em no-slip billiards} in  similar configurations. A well-known notable feature of the non-holonomic system is that the rolling sphere does not fall; its height function is bounded and oscillates harmonically up and down.
The central issue of the present work is whether similar bounded behavior can be observed in the no-slip billiard counterpart. Our  main results are  as follows: for circular cylinders in dimension $3$, the no-slip billiard has the bounded orbits property, and very closely approximates rolling motion, for a class of initial conditions which we call {\em transversal rolling impact}. When this condition does not hold,
trajectories undergo vertical oscillations superimposed to an overall downward acceleration. Considering cylinders with different cross-section shapes, we show that no-slip billiards between two parallel hyperplanes in Euclidean space of arbitrary dimension are always  bounded even under a constant force parallel to the plates; for general cylinders, when the orbit of the  transverse system (a concept that depends on a factorization of the  motion into transversal and longitudinal components) has period two\----a very common occurrence in planar no-slip billiards\----the motion in the longitudinal direction, under no forces, is generically not bounded. This is shown using a formula for a  longitudinal linear drift that we prove in arbitrary dimensions. While the systems for which we can prove 
the existence of bounded orbits have relatively simple transverse dynamics, we also briefly explore numerically a no-slip billiard system, namely the stadium cylinder billiard,
that can exhibit chaotic transversal dynamics. 
}
\end{abstract}

\section{Introduction}
A classical example of a non-holonomic mechanical system consists of a ball that rolls against the inner side of a vertical cylinder with enough speed so as not  to lose contact with the surface. We imagine that 
the surface of the ball is ideally rough, or rubbery, so that a kind of conservative static friction causes it to roll without slipping. 
 Contrary to common intuition, the ball does not fall to the ground, but oscillates harmonically up and down. This ideal behavior is
  approximately reproduced in lab experiments; see, for example, \cite{gualtieri}.  (For a study of this system in the context of the theory of nonholonomic systems see also 
 \cite{borisov} and \cite{Neumark}.  We give below a fairly complete description of it in a form that will serve our present needs.)

Rather than rolling,  we imagine that the idealized ball 
bounces off the cylinder's inner wall in a kind of grazing motion, but still under the same kind of  conservative static friction constraint that   couples 
the  linear and rotational components of the motion. Would the ball still defy gravity, so to speak, as in the rolling process? 

This question introduces a number of issues. First, how should we define such a discrete system in a way that is reasonably well-motivated, starting from general physical principles such as energy conservation and time reversibility?  A good candidate   has been studied before under the name of {\em no-slip billiard} systems. For some early papers, see \cite{Garwin,gutkin,W}.  See also \cite{LLM,MLL},
as an example of how no-slip collision models are  used in situations where exchange of linear and angular momentum of particles is desired.
Recently, the authors have begun to  pursue   a more systematic  investigation of the dynamics of no-slip billiards; see \cite{CF,CFII,CFZ}. Except for  \cite{CF}\----a differential geometric study of
rigid collisions in $\mathbb{R}^n$ in which no-slip billiards arise in a  natural way\----we are only aware of studies that are restricted to dimension $2$. Clearly, we need here to consider such systems in dimension $3$ (or greater, if one is also interested, as we are, in related problems of a more differential geometric 
flavor.) 

Another issue to consider is  whether any particular choice of  dynamical system  defined by  sequences of impacts can be said to serve as a good model for a theory of discrete nonholonomic dynamics.  Concretely, does the discrete model exhibit similar properties as the continuous time system, such as having bounded (e.g., not falling to the ground) trajectories; and do grazing trajectories approximate the well-studied rolling motion? 

The purpose of this paper is twofold. First, we wish to pursue  the problem of existence of bounded trajectories of no-slip billiards in generalized cylinders in dimensions $2$ and greater. By a {\em cylinder} we mean a domain in $\mathbb{R}^n$ that has translation symmetry along an axis. (The $3$-dimensional circular cylinder will have some prominence here, but we also consider other domains.) In dimension $2$, \cite{gutkin} showed that the no-slip billiard motion in an infinite strip
is bounded, and in \cite{CFII,CFZ} we extended and refined this observation  in a way that provides some insight into the dynamics of general  polygonal no-slip billiards.  As might be expected, the higher dimensional story is more subtle; we describe in this paper some of the new phenomena  that arise beyond dimension $2$. Another goal is to make a direct comparison between the rolling and
no-slip (bouncing) dynamics.  One of our main results here, which is specific to circular cylinders in dimension $3$, is that  for a set of initial conditions that we refer to as {\em transversal rolling impact}, the billiard motion is indeed bounded. We also show numerically that the discrete system very closely approximates rolling  under the just mentioned class of initial conditions. 
On the other hand, if these  conditions are not satisfied, it is observed numerically that the ball will acquire an overall acceleration under an external force.

There are many  natural questions that we do not yet pursue in this study. For example, when can the discrete system be said to be integrable? (See \cite{borisov} for a proof of boundedness of orbits for rolling in $3$-dimensional cylinders of arbitrary cross-section using the existence of  integrals of motion; our proof that no-slip billiard motion between parallel hyperplanes in $\mathbb{R}^n$ under constant force is bounded also makes used of conserved quantities in a suggestive way.) What can be said   about the preservation of the canonical Liouville measure? (We give sufficient conditions for the invariance of this measure   in 
\cite{CF}, and show that the measure is preserved for no-slip billiards  in dimension $2$, but the question is as yet open in higher dimensions. The non-preservation of the Liouville measure is a feature of many non-holonomic mechanical systems.) 
Moreover, proving that the  differential equations for rolling can be obtained as a limit of the 
discrete equations of the no-slip billiard with short intercollision flights is a natural question that has eluded us so far.  Our numerical experiments suggest that a certain {\em transversal rolling impact defect} parameter introduced below should play a central role. The focus of the present paper is not on  such general issues but is aimed  mainly  at pointing out  interesting phenomena that can be observed when comparing rolling and bouncing motion in such velocity constrained systems, as well as proving results on boundedness of orbits for generalized cylinders that do not have a rolling counterpart. 

The rest of the paper is outlined as follows.  The next section summarizes the paper's main observations.  Section \ref{sec:more definitions and basic facts} sets up the geometric notation and background for defining the no-slip collision map.  Section \ref{sec:rolling motion on cylinders} gives a self-contained overview of rolling on cylinders  with general cross section.  Sections \ref{sec:No-slip billiards in  general cylinders}, \ref{sec:Forced billiard motion in a circular cylinder}, and \ref{sec:Forced motion between parallel planes} give proofs of the main results.  We conclude in Section \ref{sec:final comments} with some numerical observations and a proposal for future work on no-slip billiards in cylinders whose transverse dynamics exhibit chaotic behavior.

 \section{Main definitions and results} We begin with a few definitions and, in particular, recall the notion  of no-slip billiards.  
 Let $D=D_0(r)$ denote the ball of radius $r$ centered at the origin in $\mathbb{R}^n$. It is given a mass distribution measure $\mu$ having total mass $m$. We assume that the first moment $\overline{x}=\int_Dx\, d\mu(x)$ is zero and let
$L=(l_{ij})$ be the matrix of second moments per unit  mass:
$$l_{ij}:=\frac1m \int_{D} x_ix_j\, d\mu(x). $$
Only  mass distributions for which $L$ is scalar, $L=\lambda I$, where $I$ is the identity matrix in dimension $n$, will be considered here. When $\mu$ is rotationally symmetric and has a continuous  density   relative to the volume measure, then, expressed in terms of the density 
as a function of the radial coordinate, 
$$ \lambda=\frac{V_n}{m}\int_0^r s^{n+1} \rho(s)\, ds$$
where $V_n$ is the volume of an $n$-dimensional ball of radius $1$.  
It is convenient to introduce the parameter $\gamma=\sqrt{2\lambda}/r$.
For  example, $\lambda= \frac{r^2}{n+2}$ and $\gamma=\sqrt{\frac{2}{n+2}}$ if $\rho=m/V_n r^n$ is constant.  It is not difficult to see that, for a general rotationally symmetric mass distribution, 
$0\leq \lambda\leq r^2/n$ and $0\leq \gamma \leq \sqrt{\frac2{n}}$.

Let $\mathcal{B}_0$ be an open connected region in $\mathbb{R}^n$. Let  $\mathcal{B}$ be the closure of the set of $x\in \mathbb{R}^n$ for which 
the ball   of center $x$ and radius $r$ is contained in $\mathcal{B}_0$. We assume that $\mathcal{B}$ is a
  manifold with corners as defined in \cite{lee}.  We refer to $\mathcal{B}$ as the {\em billiard domain} and, occasionally, to $\mathcal{B}_0$ as the {\em enlarged billiard domain}. For regular points  $a$ of the boundary $\partial \mathcal{B}$ (that is, a point at which a tangent space is defined) define $\nu(a) := \nu_a$ to be the unit normal vector field pointing towards the interior of $\mathcal{B}$. 
  Denote by $SE(n)$ the Euclidean group of positive isometries of $\mathbb{R}^n$. Its
   elements will be written as pairs $(A,a)\in SO(n)\times \mathbb{R}^n$ acting on $\mathbb{R}^n$  by affine transformations $x\mapsto (A,a)x:=Ax+a$ on $\mathbb{R}^n$.

  The configuration manifold of a spherical particle of radius $r$ with center in $\mathcal{B}$ is the manifold with corners $M\subset SE(n)$ consisting of all $(A,a)$ such that $a\in \mathcal{B}$.  Naturally, a boundary point of $M$ is a pair $(A,a)$ such that $a\in \partial \mathcal{B}$.

  In this paper we are interested in rolling and bouncing motion in cylinders, by which we will mean the following, unless further geometric assumptions are made:
 \begin{definition}[Cylinders in $\mathbb{R}^n$]\label{cylinder}
  A {\em solid cylinder} in $\mathbb{R}^n$ with {\em axis  vector} $e$ is a domain $\mathcal{B}$ such that $a+se\in \mathcal{B}$ for
  all $a\in \mathcal{B}$ and $s\in \mathbb{R}$. Here $e$ is a unit vector in $\mathbb{R}^n$ that defines which direction is up.
  Writing $\overline{\mathcal{B}}=\mathcal{B}\cap e^\perp$, we have
  $\mathcal{B} = \overline{\mathcal{B}}\times \mathbb{R}e$. The boundary cylinder will be written $S=\overline{S}\times \mathbb{R}e$, where
  $S=\partial \mathcal{B}$.
  \end{definition}

 As far as the rolling process is concerned,  all we will  need is the boundary cylinder $S$; the solid cylinder will be needed for the  no-slip   billiard systems.  For  rolling  we also assume that
 $S$ is a smooth hypersurface in $\mathbb{R}^n$, whereas for the billiard motion we may allow singular points so long as we consider orbits that avoid them.

 From the spherically symmetric mass distribution $\mu$ with second moments matrix $L=\lambda I$, $\lambda = \frac{1}{2}(r\gamma)^2$, we define the {\em kinetic energy Riemannian metric} on $M$  as follows: Let $\xi=(U_\xi A, u_\xi)$ and $\eta=(U_\eta A, u_\eta)$ be vectors tangent to $M$ at $(A,a)$. Then 
 \begin{equation}\label{metric}\left\langle \xi,\eta\right\rangle :=m\left\{\frac{(r\gamma)^2}{2}\text{Tr}\left(U_\xi U_\eta^\dagger\right) + u_\xi\cdot u_\eta\right\}. \end{equation}
  It is  easily checked that this bilinear form is (the restriction to $TM$ of) a left-invariant Riemannian metric on  $SE(n)$.

  Observe that if $(A(t),a(t))$ is a differentiable curve in $M$, then its derivative  at $t=0$  is given by $(\dot{A},\dot{a})$ where  $\dot{a}=u\in \mathbb{R}^n$ and $\dot{A}=UA$, where  $U\in  \mathfrak{so}(n)$ is an element of the Lie algebra of the rotation group. The kinetic energy of the moving ball with state $\xi$ at  configuration $(A,a)$ is then written in the norm associated to the Riemannian metric as $\frac12\|\xi\|^2$. Notice    that the metric and the kinetic energy function do not depend on $A$.

    The boundary of the configuration manifold $M$ has a special structure that will be important for our concerns, which we call the {\em no-slip bundle}. It is a vector subbundle of the tangent bundle to $\partial M$, denoted $\mathfrak{S}$, and defined as follows.
    \begin{definition}[The no-slip bundle]\label{no slip bundle}
    At each regular point  $q=(A,a)\in SE(n)$, $a\in S=\partial \mathcal{B}$, describing a boundary configuration of the moving particle   system, define
    \begin{equation}\label{S}\mathfrak{S}_q=\left\{(UA,u)\in T_qM: u=rU \nu_a \right\}\end{equation}
    where  $r$ is the radius of the particle. We call $\mathfrak{S}_q$ the {\em no-slip space} at $q$.
    \end{definition}
    
    This definition has a clear motivation if we note that the point of contact  $x= a-r\nu_a$ of the moving particle with the boundary of the extended domain  $\mathcal{B}_0$
   has velocity
    $$ v_x = U(x-a)+u = -rU\nu_a+ rU\nu_a = 0.$$
  Thus a state of the system (that is, a   tangent vector at some point of $M$) at a boundary configuration $q$ is in the no-slip bundle if the point of contact of the particle with the boundary of the domain $\mathcal{B}_0$  has $0$ velocity. (The reader should keep in mind the distinction between the billiard domain $\mathcal{B}$ and the enlarged domain   $\mathcal{B}_0$; the former contains the center of masses of the particle, and the latter contains the entire ball of radius $r$ around those centers in $\mathcal{B}$.)

    \begin{definition}[No-slip rolling] A particle whose
  motion is described by  a smooth curve $q(t)\in \partial M$ is said to undergo {\em (no-slip) rolling} if $\dot{q}(t)\in \mathfrak{S}_{q(t)}$ for each $t$. 
 We also say in this case that the particle {\em rolls} with no slip on the boundary surface of the (enlarged) billiard domain. 
    \end{definition}

  We consider next the dynamical equations describing the   motion of a particle of radius $r$, mass $m$, and rotationally symmetric mass distribution
  with parameter $\gamma$, that rolls with no-slip on the boundary of the enlarged domain $\mathcal{B}_0$. With some innocuous abuse of language we speak of rolling on $S=\partial \mathcal{B}$, the locus of the centers of mass of the moving particle, rather than the hypersurface of   contact points. Keeping this in mind, we will avoid when possible  referring to  $\mathcal{B}_0$.

  For the details on non-holonomically constrained systems we suggest \cite{bloch}. Let $f$ be a vector field on $M$ which we interpret as a force field. 
  The motion of the unconstrained system is governed by Newton's equation $m\frac{\nabla \dot{q}}{dt}=f$, where $\nabla$ is the Levi-Civita connection for
  the kinetic energy Riemannian metric (\ref{metric}). The constraint (rolling on the boundary hypersurface $S$) can be imposed by adding a force field $N$ taking values in $\mathfrak{S}^\perp$. This is the force needed to keep $\dot{q}$ in $\mathfrak{S}_q$. As $\left\langle N, \dot{q}\right\rangle_q=0$, the constraint force  does no work. 
 \begin{definition}[Constrained Newton's equation]\label{Newton}
A smooth path $q(t)\in SO(n)\times S$ satisfies the {\em constrained Newton's equation} with force field $f$  and non-holonomic constraint defined by the no-slip bundle $\mathfrak{S}$ if $\dot{q}(t)\in \mathfrak{S}_{q(t)}$ for all $t$ and
$$\frac{\nabla \dot{q}}{dt}  = m^{-1}f +N $$
where $N=N(q,\dot{q})$ lies in  $\mathfrak{S}^\perp$. 
 \end{definition}

  So far the interior of $\mathcal{B}$ has not been relevant since in the rolling process the center of the moving particle must remain on the
  boundary hypersurface $S$. This will change, of course, as 
  we consider next the notion of {\em no-slip bouncing}. Before stating the definition, we recall the set-up of no-slip billiard systems.
  (The reader is referred to \cite{CF} for a detailed account of what is briefly skimmed over below, and to \cite{CFZ} for some dynamical results for $2$-dimensional systems.) In a billiard system in $\mathcal{B}$, 
  the motion  of the  particle (of radius $r$ and  spherically symmetric mass distribution of total mass $m$ and distribution parameter $\gamma$) 
  consists of a sequence of flight segments in the interior of $M\subset SE(n)$ separated by instantaneous  collisions with the boundary $\partial M$. 
  The inter-collision flights are described by the unconstrained Newton's equation $m\frac{\nabla \dot{q}}{dt}=f$, or by geodesic motion  $\frac{\nabla \dot{q}}{dt}=0$ if no forces are present. Collisions at a given point $q\in \partial M$ are defined by a linear map $C_q:T_qM\rightarrow  T_qM$ that sends   {\em pre-collision}
  vectors, that is, vectors in $\{v\in T_qM: \langle v,\mathbbm{n}_q\rangle\leq 0\}$ to   {\em post-collision} vectors in $\{v\in T_qM: \langle v,\mathbbm{n}_q\rangle\geq 0\}$, where $\mathbbm{n}_q$ is the inward pointing unit vector at a boundary point $q\in M$.

The map $C_q$ will be selected based on the following physical assumptions: (1) Collision is   energy preserving; this means that  $C_q$ is an orthogonal map relative to the Riemannian norm on $M$. (2) It is time reversible, which forces $C_q$ to be a linear involution. (3) The orthogonal component of the pre-collision velocity  in the no-slip subspace $\mathfrak{S}_q$ is not affected by $C_q$. This third requirement is natural since an (instantaneously) rolling collision,
for which the point of contact is stationary,  
  should not cause a change in linear or angular velocities due to an  exchange of momentum at impact, except in the direction $\mathbbm{n}$. In fact, a more careful analysis of the impact event (as in \cite{CF}) identifies the orthogonal complement of $\mathfrak{S}_q$ as the space containing  {\em impulse} vectors. 
  Ordinary billiard systems are those for which $C_q$ is   the identity not only on $\mathfrak{S}_q$ but also on the subspace of $\mathfrak{S}^\perp_q$ perpendicular to $\mathbbm{n}_q$, in which case rotation and translation components of the motion are de-coupled and the rotation part may be ignored. 
  In other words, for standard billiards, the collision maps $C_q$ are specular reflection (in the kinetic energy inner product).
  This corresponds to a perfectly slippery contact between the moving particle and the boundary of the billiard domain. 
   If instead we wish to model the behavior of a perfectly elastic ball with a perfectly rough (or rubbery) surface, for which angular and linear velocities may be partly exchanged at collision, then we are forced under the other  assumptions to require $C_q$ to be the negative of the identity map on the full orthogonal 
   complement of the no-slip subspace (which here includes $\mathbbm{n}_q$.)
 With this in mind we state the following definition.
 
 \begin{definition}[No-slip bounce]\label{Cq defined}
 A {\em no-slip} bounce at $q\in \partial M$ is the correspondence $v^-\mapsto v^+$  sending a pre- to a post-collision velocity at $q$ defined by the no-slip collision map $C_q$. The latter is the orthogonal (relative to the  kinetic energy Riemannian metric (\ref{metric})) linear involution of $T_qM$ equal to the
 identity on $\mathfrak{S}_q$ and minus the identity on this space's full orthogonal complement. 
 \end{definition}

  Due to the spherical symmetry of the moving particle's mass distribution and the discrete nature of the billiard process, it is less relevant for no-slip billiard systems that we keep track of the actual rotation matrix $A$. In fact, prior to each collision, we can imagine that the particle is rotated back to a fixed reference orientation in space, keeping linear and angular velocities unaffected.
This leads to 
  the notion of  {\em reduced phase space} $\mathcal{N}$   at boundary configurations. With  $S=\partial\mathcal{B}$, we define 
  \begin{equation}\label{reduced phase space}\mathcal{N}:= \left.T\mathcal{B}\right|_{S} \times \mathfrak{so}(n)\end{equation}
  whose elements are triples $(a, u, U)$,  $u\in T_a\mathcal{B}$.  
  In other words, we may assume that, at each collision, the rotation matrix is the identity and only keep track of the matrix $U\in \mathfrak{so}(n)$
  of infinitesimal angular velocities, in addition to the point $(a,u)$ representing the velocity of the center of mass  at  $a\in S$.  
  Since $A$ will not be involved, we write $C_a$ rather than $C_q$ when viewing it as a map on $\mathcal{N}_a:=T_a\mathcal{B}\times \mathfrak{so}(n).$

 We now turn to no-slip billiards on solid cylinders $\mathcal{B}$ with axis vector $e$.   If $a$ is a point in $\mathcal{B}$, we occasionally write $\bar{a}$ for its projection to the cross-section $\overline{\mathcal{B}}$; similarly, if $u$ is a tangent vector at  $a$, its projection to a vector  at $\bar{a}$ on the cross-section may be written $\bar{u}$.

The cross-section domain $\overline{\mathcal{B}}$ has its own no-slip billiard system, with a ball of dimension $n-1$ as the moving particle. The latter will also have a spherically symmetric mass distribution (if this is the case  for the $n$-dimensional particle) given by the   marginal mass distribution after integrating   along   $e$. In this case the parameter $\gamma$ for the $(n-1)$-dimensional particle is the same as for the $n$-dimensional one.

For standard billiard systems on a cylinder (for which reflection is specular), it is both clear and unremarkable that trajectories of the $n$-dimensional system should project to trajectories of the billiard system on $\overline{\mathcal{B}}$. This is due to conservation of momentum resulting from the translation symmetry along $e$. For the no-slip billiard, this component of momentum is no longer conserved.  Nevertheless
this projection property  still holds. 

\begin{theorem}\label{first theorem}
Let  $\mathcal{N}$ be the reduced phase space of the no-slip billiard system on the solid cylinder domain $\mathcal{B}\subset \mathbb{R}^n$, and let
$\overline{\mathcal{N}}$ be the reduced phase space for the associated  transverse billiard system. Then trajectories of the no-slip billiard on $\mathcal{N}$, possibly with a constant force in the longitudinal direction,  project to trajectories of the no-slip billiard map on $\overline{\mathcal{N}}$, where the latter system is given the same mass distribution parameter $\gamma$ as the billiard in dimension $n$. 
 \end{theorem}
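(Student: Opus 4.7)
My approach is to exhibit the natural projection $\pi\colon \mathcal{N}\to \overline{\mathcal{N}}$, $(a,u,U)\mapsto (\bar a,\bar u,\bar U)$, where $\bar a$ and $\bar u$ are the orthogonal projections of $a$ and $u$ onto $e^\perp$, and $\bar U\in\mathfrak{so}(n-1)$ is the top-left $(n-1)\times(n-1)$ block of $U$ relative to the decomposition $\mathbb{R}^n=e^\perp\oplus\mathbb{R}e$. Writing
\[
U=\begin{pmatrix}\bar U & -w\\ w^T & 0\end{pmatrix},\qquad u=\bar u+u_e\, e,
\]
the flight portion of the proof is immediate: between impacts $U$ is constant (no torques act) and a constant longitudinal force $f=\phi e$ produces $\ddot a=(\phi/m)e$, so $\bar a(t)$ evolves linearly with velocity $\bar u$ while $\bar U$ is constant\---exactly the free-flight equations of the transverse billiard. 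Since $\mathcal{B}=\overline{\mathcal{B}}\times\mathbb{R}e$, one has $a(t)\in S$ if and only if $\bar a(t)\in\overline{S}$, and the inward unit normal $\nu_a$ lies in $e^\perp$ and coincides with the transverse normal $\nu_{\bar a}$, so impact times and impact loci project correctly.

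The heart of the argument is to show that the no-slip collision $C_a$ also projects to $\overline{C}_{\bar a}$. First I would verify that the kinetic-energy form (\ref{metric}) splits orthogonally under the above block decomposition. A short block multiplication gives $\mathrm{Tr}(U_1U_2^\dagger)=\mathrm{Tr}(\bar U_1\bar U_2^\dagger)+2\,w_1\cdot w_2$, so $\langle\cdot,\cdot\rangle$ decomposes as the sum of a \emph{transverse} piece in $(\bar U,\bar u)$ and a \emph{longitudinal} piece in $(w,u_e)$. The transverse piece is proportional to the kinetic-energy metric of the $(n-1)$-dimensional no-slip billiard on $\overline{\mathcal{B}}$ for a ball of radius $r$ and parameter $\gamma$; here one uses that the marginal along $e$ of the spherically symmetric $\mu$ has scalar second-moment matrix $\lambda I_{n-1}$ with the same $\lambda$, giving the same $\gamma=\sqrt{2\lambda}/r$. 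Because $\nu_a\perp e$, the no-slip condition $u=rU\nu_a$ decouples as $\bar u=r\bar U\nu_a$ (transverse) and $u_e=r\,w\cdot\nu_a$ (longitudinal), so
\[
\mathfrak{S}_q=\overline{\mathfrak{S}}_{\bar a}\oplus \mathfrak{L}_a
\]
as an orthogonal direct sum, where $\overline{\mathfrak{S}}_{\bar a}$ is the no-slip subspace of the transverse system and $\mathfrak{L}_a$ its longitudinal analogue.

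Since $C_a$ is the orthogonal involution with $+1$-eigenspace $\mathfrak{S}_q$ and $-1$-eigenspace $\mathfrak{S}_q^\perp$, the orthogonality of both splittings forces $C_a$ to preserve the transverse/longitudinal decomposition and to restrict on the transverse factor to the orthogonal involution with $+1$-eigenspace $\overline{\mathfrak{S}}_{\bar a}$, i.e., to $\overline{C}_{\bar a}$. Combined with the flight analysis, this yields $\pi\circ T=\overline{T}\circ\pi$ where $T,\overline{T}$ are the two billiard maps, proving the theorem. The main obstacle is the orthogonal compatibility at the collision: one must check both that the kinetic-energy form and the no-slip subspace respect the transverse/longitudinal splitting. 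Both rest crucially on the cylinder-specific fact that $\nu_a\in e^\perp$ at every regular boundary point; were $\nu_a$ to have an $e$-component, the no-slip relation would mix $\bar U$ with $w$ and $\bar u$ with $u_e$, the splitting would fail, and no projection of dynamics could exist.
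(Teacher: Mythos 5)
Your proposal is correct, and it rests on the same underlying splitting the paper uses: the orthogonal decomposition of $\mathfrak{se}(n)$ (with respect to the kinetic-energy form) into the transverse factor $\mathfrak{se}(n-1)$ and the longitudinal factor $(\mathbb{R}^{n-1}\wedge e)\oplus\mathbb{R}e$, together with the commutation $\Pi\circ C_a=C_{\bar a}\circ\Pi$ and the remark that the marginal mass distribution gives the transverse particle the same $\gamma$. Where you differ is in how the commutation is verified. The paper computes explicitly, using the closed-form expression for the no-slip collision map (Proposition \ref{no-slip map}), how $C_a$ acts on the generators $(0,w\wedge e)$ with $w\perp\nu_a$, $(0,\nu_a\wedge e)$, and $(e,0)$, and reads off that $C_a$ respects $\mathfrak{se}(n-1)\oplus\mathfrak{se}(n-1)^\perp$. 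You instead avoid the formula entirely and argue structurally from Definition \ref{Cq defined}: since $\nu_a\in e^\perp$ for a cylinder, the rolling constraint $u=rU\nu_a$ decouples into a transverse and a longitudinal condition, so $\mathfrak{S}_q$ splits compatibly with the orthogonal block decomposition (your trace identity $\mathrm{Tr}(U_1U_2^\dagger)=\mathrm{Tr}(\bar U_1\bar U_2^\dagger)+2\,w_1\cdot w_2$ is exactly right, and the normal $(0,\nu_a)$ lies in the transverse block), whence the orthogonal involution determined by $\mathfrak{S}_q$ automatically preserves both blocks and restricts on the transverse block to the involution determined by $\overline{\mathfrak{S}}_{\bar a}$, i.e.\ to $\overline{C}_{\bar a}$. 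Your route is slightly more conceptual and makes transparent exactly where the cylinder hypothesis enters (failure of $\nu_a\in e^\perp$ would mix the blocks), and it also spells out the free-flight and impact-time matching that the paper leaves implicit; the paper's computational route, on the other hand, produces the explicit action of $C_a$ on the longitudinal components (Equation (\ref{coll map})), which it then reuses directly in the longitudinal analysis of Proposition \ref{longitudinal equations} and Theorem \ref{drift}. Either argument proves the theorem; only minor cosmetic points in yours deserve attention (the restricted metric on the transverse block is in fact equal, not merely proportional, to the transverse kinetic-energy metric, and the paper reserves the symbol $\mathfrak{S}_q^\perp$ for the complement inside $T_q\partial M$, whereas you use it for the full $-1$ eigenspace).
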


 \begin{figure}[htbp]
\begin{center}
\includegraphics[width=3.0in]{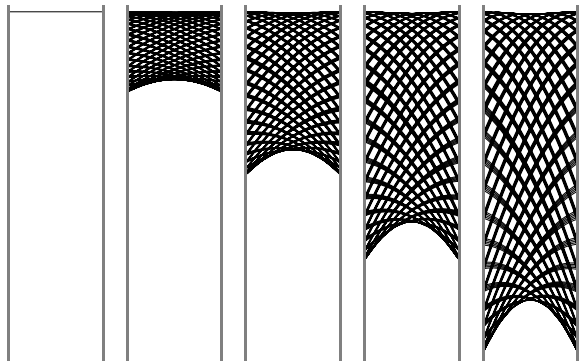}\ \ 
\caption{{\small  No-slip billiard system between two parallel plates under gravity have bounded orbits in all dimensions. Far left: a simple periodic orbit with gravity turned off.  Orbits shown from left to right are under the influence of increasing force. }}
\label{plates}
\end{center}
\end{figure}

 The above theorem only expresses part of 
 a very useful decoupling of the full set of linear and angular velocity components into two that will greatly facilitate the study of  the displacement of the particle's center of mass 
 along the longitudinal direction. (See Proposition \ref{longitudinal equations}.) To anticipate what is to come, we note that the Lie algebra $\mathfrak{se}(n)$ of the Euclidean group $SE(n)$ splits into the Lie algebra $\mathfrak{se}(n-1)$ of $SE(n-1)$, the group associated with  the transversal system, and the orthogonal complement $\mathfrak{m}$ of the latter Lie algebra with respect to the kinetic energy metric. The no-slip  collision map will factorize according to this orthogonal decomposition, and our main interest will be on the factor $\mathfrak{m}$ since this is the one that contains the $e$-component of the center of mass velocity. The subspace $\mathfrak{m}$ has dimension $n$; this means that, so long as we are concerned with the longitudinal motion, we only need to focus on $n$ out of $\frac{n(n+1)}{2}$ velocity components (the latter number being the dimension of $SE(n)$).  
 Therefore, an investigation  of the motion of the   moving particle   naturally splits into a study of the {\em transverse billiard} system on $\overline{\mathcal{B}}$ and a reduced system containing the component of the motion along $e$.

 For $3$-dimensional no-slip billiards in cylinders, this theorem  says that the transversal part of the motion reduces to understanding no-slip billiard dynamics
 in dimension $2$. In the course of this paper we refer to a few  results in  dimension $2$ established in our \cite{CFZ}. In general however
 the focus of the rest of the paper is on the motion along the axis of translation symmetry set by the axis vector $e$, rather than on the transverse  dynamic, and
 in particular on the question of boundedness of orbits. 
Note that we will refer to the direction along $e$ as the longitudinal direction or the {\em vertical} direction, using the terms interchangeably.

We now summarize our main results  concerning the question of whether or not trajectories are bounded in the longitudinal direction of the cylinder. As will be seen, the general answer is: sometimes yes, and sometimes no.  The following theorem extends the main result of \cite{gutkin} from free motion in $2$-dimensional strips to possibly forced motion in  $n$-dimensional regions bounded by parallel hyperplanes. (This domain is a cylinder, according to our general definition.)

 \begin{theorem}\label{two plates bounded}
 Consider a domain whose boundary consists of two parallel hyperplanes in $\mathbb{R}^n$, $n\geq 2$. Then a trajectory of the no-slip billiard system 
 whose initial center of mass velocity is not parallel to the hyperplanes 
  is bounded. Trajectories remain bounded if a constant force   is applied to the particle's center of mass along   any direction parallel to those hyperplanes.
  \end{theorem}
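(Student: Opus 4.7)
The plan is to decompose the collision dynamics into independent two-dimensional blocks and then analyze the single block affected by the force via a fixed-point argument. Choose coordinates so the plates are $\{x_n=0\}$ and $\{x_n=h\}$, and without loss of generality let the constant force be $f=f_1e_1$ with acceleration $a_1=f_1/m$.

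First I identify the relevant invariants. The no-slip subspace $\mathfrak{S}_q=\{(U,u):u=\pm rUe_n\}$ at each plate depends only on the angular components $U_{in}$, $i<n$, and computing its orthogonal complement in the kinetic energy metric shows that $\mathfrak{S}_q^\perp$ involves only $u_1,\ldots,u_n$ and $U_{in}$ for $i<n$. Consequently (i) every $U_{ij}$ with $i,j<n$ lies in $\mathfrak{S}_q$ and is therefore fixed by each collision; being unchanged along flights as well (the force is applied at the center of mass), each such $U_{ij}$ is a constant of motion. Similarly, (ii) the collision reverses the slip velocity $v_c:=u-rU\nu_a$, whose $e_n$-component equals $u_n$, so $u_n\mapsto-u_n$ at every collision; since $u_n$ is constant between collisions (the force is perpendicular to $e_n$), $|u_n|$ is a constant of motion. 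Provided $u_n(0)\neq 0$, the ball thus alternates indefinitely between the plates at the fixed inter-collision time $\Delta t=(h-2r)/|u_n|$.

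For each $i<n$ the pair $y_i:=(u_i,rU_{in})$ evolves independently of the other pairs. The collision maps restrict to explicit $2\times 2$ involutions
\[
B=\tfrac{1}{1+\gamma^2}\begin{pmatrix}1-\gamma^2 & 2\gamma^2\\ 2 & \gamma^2-1\end{pmatrix},\qquad T=\tfrac{1}{1+\gamma^2}\begin{pmatrix}1-\gamma^2 & -2\gamma^2\\ -2 & \gamma^2-1\end{pmatrix},
\]
each an isometry for the restricted metric $\mathrm{diag}(1,\gamma^2)$ inherited from the kinetic energy. Their product $TB$ is an isometry with determinant $1$ and trace $2(1-6\gamma^2+\gamma^4)/(1+\gamma^2)^2$, of modulus at most $2$ on the admissible range $0<\gamma\leq\sqrt{2/n}$; hence $TB$ is a rotation and, since $1$ is not an eigenvalue, $I-TB$ is invertible. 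During each flight $y_i$ is shifted by $d_i=(a_i\Delta t,0)$, so the pre-collision states satisfy the two-step affine recurrence $y^-_{k+2}=TB\,y^-_k+(T+I)d_i$. For $i\neq 1$ one has $d_i=0$ and $y_i$ simply rotates on a bounded ellipse; for $i=1$ the unique fixed point $y_1^\ast=(I-TB)^{-1}(T+I)d_1$ is computed explicitly as $y_1^\ast=\bigl(\tfrac12 a_1\Delta t,\,-\tfrac{1}{2\gamma^2}a_1\Delta t\bigr)$, and $y^-_k-y_1^\ast$ is again a bounded rotation. Hence all velocity and angular-velocity components remain bounded for all time.

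Boundedness of position then follows from a Birkhoff-sum argument. The displacement along $e_1$ over one flight is $u_1^+(t_k)\Delta t+\tfrac12 a_1(\Delta t)^2$, and a direct computation shows that the fixed-point value $u_1^{+,\ast}=-\tfrac12 a_1\Delta t$ makes this contribution vanish flight by flight. Writing $u_1^+(t_k)=u_1^{+,\ast}+\delta_k$, the deviations $\delta_k$ are the first coordinates of $(TB)^{\lfloor k/2\rfloor}(y_1^--y_1^\ast)$; since $I-TB$ is invertible, the partial sums $\sum_{k=0}^{N-1}(TB)^k=((TB)^N-I)(TB-I)^{-1}$ remain bounded uniformly in $N$, and hence so does $\sum_k\delta_k$. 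The same argument with $d_i=0$ handles the $e_i$-displacement for $1<i<n$, and the $e_n$-displacement is trivially bounded between the plates. The decisive step, where the ``conserved quantities'' alluded to in the introduction really come to bear, is the exact cancellation $u_1^{+,\ast}+\tfrac12 a_1\Delta t=0$, which expresses the fact that the no-slip reaction funnels precisely the right share of the force-accumulated linear momentum into angular motion to preclude any secular drift.
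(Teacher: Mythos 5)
Your proof is correct, but it takes a genuinely different route from the paper's. The paper first uses Theorem \ref{first theorem} to reduce to the plane spanned by $e$ and $\nu$, then introduces the angular displacement $k_\ell$ and observes that over every two collisions the ratio of angular to linear displacement is exactly $\pm\gamma$ (the ``lines of contact''); combining these two linear constraints with conservation of total energy forces the pairs $(h_{2n},h_{2n+1})$ onto an ellipse, which bounds the heights without ever diagonalizing the collision dynamics. You instead work with the explicit $2\times2$ blocks $(u_i, rU_{in})$, show the two-plate composition $TB$ is a nontrivial rotation in the kinetic-energy metric so that the two-step recurrence is an elliptic affine map, locate its fixed point, and verify the exact cancellation $u_1^{+,\ast}+\tfrac12 a_1\Delta t=0$ together with boundedness of the geometric sums $\sum_m (TB)^m=(I-TB)^{-1}(I-(TB)^M)$. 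This is essentially the strategy the paper reserves for the circular cylinder (Theorem \ref{main}), where secular terms are cancelled against bounded rotation sums, so your argument buys uniformity of method across the two theorems, at the cost of explicit matrix computations, while the paper's invariant-plus-energy argument is shorter and ``conceptual'' (it is exactly the conserved-quantity proof advertised in the introduction). Two minor points: your claim about $\delta_k$ being the first coordinate of $(TB)^{\lfloor k/2\rfloor}(y_1^--y_1^\ast)$ is slightly imprecise\----the odd-indexed collisions follow the $BT$ recurrence with its own fixed point, and a post-collision map $B$ or $T$ must be applied\----but since that fixed point also yields post-collision value $-\tfrac12 a_1\Delta t$ and the corresponding rotation sums are likewise bounded, the conclusion is unaffected; and, like the paper, you implicitly assume $\gamma>0$ (for $\gamma=0$ the map $TB$ is the identity, the statement fails, and the paper's ellipse degenerates as well), so this is not a defect relative to the paper.
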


Next we ask wether the property of having bounded orbit holds for general cylinders. We show that even in the absence of forces the motion may not be bounded. Typically the  height of particle's center of mass will  possess  a drift in one direction along the cylinder axis $e$ superimposed to an oscillatory part. 

\begin{wrapfigure}{L}{0.5\textwidth}
\centering
\includegraphics[width=2.5in]{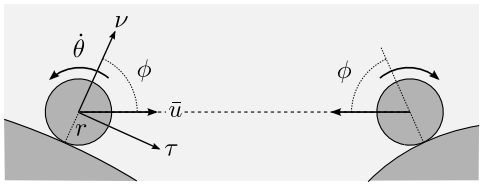}\ \ 
\caption{\label{period2}{\small  Conditions for a transversal period $2$ orbit in dimension $3$.  The projection to $\mathbb{R}^2$ of the velocity $u$ of the center of mass and the angular velocity
$\dot{\theta}$ are related by $|\dot{\theta}|=(m r/\mathcal{J})|u\sin\phi|$, where $m$ is the projected disc's mass, $r$ is its
radius, and $\mathcal{I}$ is its moment of inertia for the projected (or marginal) mass distribution.}}
\end{wrapfigure} 

Before validating this claim,
we  observe that,
due to Theorem \ref{first theorem}, it makes sense to talk about {\em transversely periodic} trajectories of the cylindrical billiard.  
 In dimension $3$, transversal period $2$ orbits
 are very ubiquitous and easy to obtain. (See Section 3 of \cite{CFZ}.) Figure \ref{period2} shows the conditions under which  they arise.
 Observe that, for $n=3$, the constraint equation on initial conditions for period $2$ orbits involves only the projection  of the linear velocity
 to the cross-section plane and 
  of the $e$-component  of the angular velocity vector $\omega$ (which  is  $\dot{\theta}$ as  indicated in Figure
  \ref{period2}). In particular, no conditions are imposed on the velocity components that appear in the description of the longitudinal motion.

What follows is a corollary of Theorem \ref{drift}. (The theorem holds for general  cylinders in arbitrary finite dimension.)
Referring to Figure \ref{period2}, we regard  $\tau$ and $\nu$ as the tangent and normal vectors at the first collision point and $\omega$ as the pre-collision angular velocity at that point; $\phi$ is as indicated in that figure.

 \begin{corollary}\label{drift example} A transversal period $2$  orbit of a $3$-dimensional general cylinder billiard, under no forces, with initial linear vertical velocity $\sigma_0$   and initial angular velocity vector $\omega$ has a vertical drift 
  $$ \lim_{\ell\rightarrow \infty}\frac{h_\ell}\ell = 
  \frac{ \sigma_0\tan \phi + \gamma^2 r\left(\omega_\nu   +\omega_\tau\tan\phi \right)}{\tan\phi +2\gamma^2 },$$
where
   $\omega_\nu$ and $\omega_\tau$ are the $\nu$ and $\tau$ components of $\omega$, and $h_\ell$ is the height (ie. signed vertical displacement) after $\ell$ collisions. 
  The condition on the orbit for having transversal period $2$ does not restrict the values $\sigma_0, \omega_\nu, \omega_\tau$.
  Thus the motion is generically unbounded in the vertical direction. If the vertical drift is $0$, the motion is bounded. 
  \end{corollary}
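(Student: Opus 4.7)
My plan is to deduce the corollary directly from Theorem \ref{drift}, specialized to $n=3$ and to a transversally periodic orbit of (transverse) period $2$. First, Theorem \ref{first theorem} tells us that the no-slip billiard on the cylinder projects cleanly onto the transverse billiard on $\overline{\mathcal{B}}$, so the notion of ``transversal period $2$'' is well-defined; the transverse collision geometry (points on $\overline{S}$, the adapted frame $\{\nu,\tau\}$, the incidence angle $\phi$, and the axial component $\omega_e$ of the angular velocity) returns after every two bounces.

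Next, I would invoke Theorem \ref{drift}, which provides a formula for the longitudinal linear drift per collision in arbitrary dimensions. Applying it in this transversely periodic setting, the drift per collision becomes an explicit linear combination of the longitudinal velocity $\sigma_0$ and the transverse angular-velocity components $\omega_\nu,\omega_\tau$ (the axial component $\omega_e=\dot\theta$ being fixed by the period-$2$ constraint of Figure \ref{period2}). Identifying the correct coefficients uses Definition \ref{Cq defined}: the no-slip map $C_a$ is the identity on $\mathfrak{S}_a$ and minus the identity on $\mathfrak{S}_a^\perp$, and the no-slip relation $u=rU\nu_a$ couples $\sigma$ to $\omega_\nu,\omega_\tau$ through the frame $\{\nu,\tau,e\}$. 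Carrying out the orthogonal projection onto the subspace $\mathfrak{m}\subset \mathfrak{se}(3)$ discussed after Theorem \ref{first theorem} yields precisely the coefficients $\tan\phi$, $\gamma^2 r$, and $\gamma^2 r\tan\phi$ in the numerator and the normalizing denominator $\tan\phi+2\gamma^2$.

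The main technical step is checking that the two collisions of a transverse period-$2$ orbit contribute the \emph{same} linear combination of $(\sigma_0,\omega_\nu,\omega_\tau)$, so that the accumulated vertical displacement after $\ell$ bounces is asymptotically linear in $\ell$. This follows from the symmetry of the period-$2$ configuration: at the second bounce the frame $\{\nu,\tau\}$ and the angle $\phi$ are mirrored, and the no-slip involution $C_a$ intertwines this mirroring with the sign changes in the tangential variables, so the two per-bounce contributions add coherently rather than cancel. Dividing by $\ell$ and letting $\ell\to\infty$ then gives the stated limit.

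Finally, the period-$2$ condition $|\dot\theta|=(mr/\mathcal{J})|u\sin\phi|$ only couples the transverse speed $|u|$ and the axial spin $|\omega_e|$; it imposes no restriction on $\sigma_0$, $\omega_\nu$, or $\omega_\tau$. Hence the numerator $\sigma_0\tan\phi+\gamma^2 r(\omega_\nu+\omega_\tau\tan\phi)$ is generically nonzero on the set of period-$2$ initial data, so the vertical motion is generically unbounded. Conversely, if the numerator vanishes then $h_{\ell+2}-h_\ell=0$ for all $\ell$, and $h_\ell$ is controlled by the bounded oscillation within a single period, giving boundedness. The delicate part will be the coefficient bookkeeping coming out of Theorem \ref{drift} once projected onto $\mathfrak{m}$ in the $\{\nu,\tau,e\}$ frame; the symmetry argument ensuring coherent addition over the period should be routine.
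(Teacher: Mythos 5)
Your overall route---specializing Theorem \ref{drift} to $n=3$---is the same one the paper takes, but the proposal omits exactly the step that carries the content of Corollary \ref{drift example}: the spectral analysis of the two-collision map. In Theorem \ref{drift} the drift equals $\tfrac12\,\xi P\Lambda_0$, where $Q=\mathcal{A}_1\mathcal{A}_2$ acts on the mixed velocity space $\mathbb{R}e\oplus W$, $P$ is the orthogonal projection onto the eigenvalue-$1$ eigenspace of $Q$, and $\xi=\mathbbm{1}^\dagger(I+\mathcal{A}_1)$. To obtain the stated formula one must write $Q$ explicitly in the frame $\{\nu_i,\tau_i,e\}$ (with $\alpha=2\phi$) and exhibit its fixed vector: the paper produces the unit eigenvector $\eta$ whose $e$-component is $s_\alpha$ and whose transverse block is $\gamma(\tau_1-\tau_2)$, so that $P=\eta\eta^\dagger$ and the formula follows by evaluating $\tfrac12(\xi\eta)(\eta^\dagger\Lambda_0)$. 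Your proposal replaces this with an appeal to ``the projection onto $\mathfrak{m}$'' and to a symmetry argument that the two bounces ``contribute the same linear combination''; neither is the operative mechanism. The $\mathfrak{m}$-projection only defines the longitudinal system (Proposition \ref{longitudinal equations}); it cannot produce the coefficients $\tan\phi$, $\gamma^{2}r$, $\gamma^{2}r\tan\phi$ and the denominator $\tan\phi+2\gamma^{2}$, which come solely from the eigenvector of $Q$. Likewise the drift does not arise because per-bounce increments are equal: the increments $h_{i+1}-h_i$ oscillate, and linear growth appears only through the component of $\Lambda_0$ along the fixed direction of $Q$, the complementary component summing to a bounded term because $Q$ restricted to it is a nontrivial rotation. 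As written, your argument asserts the formula rather than deriving it.

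A second, concrete error: if the drift vanishes it does \emph{not} follow that $h_{\ell+2}-h_\ell=0$ for all $\ell$. Theorem \ref{drift} gives $h_\ell=\hat h_\ell+\lfloor \ell/2\rfloor\,\xi P\Lambda_0$ with $\hat h_\ell$ bounded and oscillatory but not two-periodic; when $\xi P\Lambda_0=0$, boundedness follows directly from this decomposition, not from exact periodicity of the heights. The remaining observations---that the period-$2$ constraint of Figure \ref{period2} involves only the transverse linear velocity and $\omega_e$, hence leaves $\sigma_0,\omega_\nu,\omega_\tau$ unrestricted, so the drift is generically nonzero---are correct and match the paper.
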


We wish next to compare the no-slip billiard system in circular cylinders with the rolling process.  Let us first review the classical fact about bounded orbits for
the rolling motion.

\begin{proposition}\label{bounded rolling}
Suppose that  the cross-section of the $3$-dimensional vertical cylinder is a differentiable simple closed curve and that $\omega_e$\----the vertical component of the angular velocity vector, a constant of motion\----is non-zero.
Then trajectories of the rolling motion  under a constant force parallel to the axis of the cylinder are bounded.
\end{proposition}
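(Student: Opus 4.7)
The plan is to reduce the rolling equations to an inhomogeneous second-order linear ODE with smooth periodic coefficients, exploit non-resonance between the oscillator's natural frequency and the forcing to bound the "vertical velocities", and then close with energy conservation to bound the height itself.

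I would first parametrize the cross-section $\overline{S}$ by arclength $s$ and carry the orthonormal moving frame $\{\tau,\nu,e\}$ with $\tau$ tangent to $\overline{S}$, $\nu$ the inward normal, and $e$ the cylinder axis. Writing the center-of-mass velocity as $u=\dot{s}\,\tau+\dot{h}\,e$ and the angular velocity as $\omega=\omega_\tau\tau+\omega_\nu\nu+\omega_e e$, the rolling constraint $u=r\omega\times\nu$ (vanishing velocity at the contact point $a-r\nu$) immediately gives $\dot{s} = -r\omega_e$ and $\dot{h} = r\omega_\tau$. Substituting these into Newton's equation (with wall reaction $N\nu$, friction $F_\tau\tau+F_e e$, and vertical force $-mge$) together with the torque equation about the center of mass recovers the stated conservation of $\omega_e$---so $\dot{s}=\dot{s}_0$ is constant and $s(t)$ advances linearly---together with the coupled equations
\[
\dot\omega_\nu = \omega_e\,\kappa(s)\,\dot{h},\qquad \ddot{h} = -\alpha\,\kappa(s)\,\omega_\nu - g_0,
\]
where $\kappa(s)$ is the curvature of $\overline{S}$, $\alpha = \gamma^2 r^2\omega_e/(1+\gamma^2)$, and $g_0 = g/(1+\gamma^2)$.

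Assuming next that $\overline{S}$ is strictly convex ($\kappa>0$), I would change variables to the turning angle $\phi(s) := \int_0^s\kappa(s')\,ds'$, which grows monotonically in $t$ and makes $\kappa$, viewed as a function of $\phi$, smooth and $2\pi$-periodic. Setting $Y:=\omega_\nu$, the first equation becomes $dY/d\phi = -\dot{h}/r$, and differentiating once more and eliminating $\ddot{h}$ yields the forced linear oscillator
\[
\frac{d^2Y}{d\phi^2}+\omega_0^2\,Y \;=\; \frac{g_0}{r\dot{s}_0\,\kappa(\phi)},\qquad \omega_0^2 := \frac{\gamma^2}{1+\gamma^2}.
\]
The bound $\gamma^2\le 2/n = 2/3$ recalled in the paper gives $0<\omega_0^2\le 2/5<1$, so $\omega_0$ is strictly separated from every nonzero integer. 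Fourier-expanding exhibits $Y$ as a bounded homogeneous oscillation at frequency $\omega_0$ plus a bounded $2\pi$-periodic particular solution; consequently $Y$ and $dY/d\phi$, and therefore $\omega_\nu$ and $\dot{h}$, remain bounded for all time. Conservation of kinetic plus potential energy then gives
\[
mgh = E - \tfrac12 m\dot{s}_0^2 - \tfrac12(m+I/r^2)\dot{h}^2 - \tfrac12 I\,\omega_\nu^2 - \tfrac12 I\,\omega_e^2,
\]
forcing $h$ to be bounded as well.

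The main obstacle I expect is treating cross-sections that are not convex: at inflection points of $\overline{S}$ the change of variable $s\mapsto\phi$ degenerates and the explicit ODE becomes singular where $\kappa=0$. The way around this is to keep $s$ (equivalently, $t$) as the independent variable and analyze the homogeneous $L$-periodic linear system $\tfrac{d}{dt}(Y,\dot{h}) = \kappa(s(t))\,M_0\,(Y,\dot{h})$ via Floquet theory: the system preserves the positive-definite quadratic form $(\alpha/\dot{s}_0^2)Y^2 + \omega_e(\dot{h}/\dot{s}_0)^2$, so its Floquet multipliers lie on the unit circle, and one need only check---by a continuity argument from the circular case, where the Floquet phase per period equals $2\pi\omega_0\notin 2\pi\mathbb{Z}$---that this phase remains non-integer, after which variation of constants against the constant vertical forcing still produces a bounded solution and the energy argument closes the proof exactly as before.
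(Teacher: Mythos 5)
Your reduction to the system $\dot\omega_\nu=\omega_e\,\kappa(s)\,\dot h$, $\ \ddot h=-\alpha\,\kappa(s)\,\omega_\nu-g_0$ is exactly the paper's Theorem \ref{equations 3 roll}, and your way of finishing differs from the paper's in an interesting way: the paper rescales to $\dot x_1=x_2,\ \dot x_2=-\eta(t)x_3+1,\ \dot x_3=\eta(t)x_2$, integrates the complex scalar equation $\dot z=i\eta z+1$ explicitly, and then verifies by a direct cancellation that the linearly growing part of $\int_0^t\mathrm{Re}\,z$ vanishes; you instead bound only the velocities $(\omega_\nu,\dot h)$ and recover boundedness of $h$ from conservation of energy. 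That closing step is legitimate and arguably cleaner (it does require the force to be nonzero, but the paper's normalization of the forcing term to $1$ makes the same implicit assumption), and your convex-case oscillator in the turning-angle variable, with $\omega_0^2=\gamma^2/(1+\gamma^2)<1$ guaranteeing non-resonance, is correct.

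The genuine gap is in the step you yourself identify as the main case, the non-convex cross-section: you propose to conclude that the Floquet phase ``remains non-integer'' by ``a continuity argument from the circular case.'' Continuity alone cannot deliver this: as the cross-section is deformed away from a circle, a merely continuous phase could cross a multiple of $2\pi$, and at such a shape the variation-of-constants step against the constant forcing would produce a resonant, linearly growing velocity, destroying the argument. What saves you is a stronger fact you did not invoke: the homogeneous system has coefficient matrix $\kappa(s(t))\,M_0$ with $M_0$ a \emph{fixed} matrix, so the matrix commutes with its own time integral and the fundamental solution is $\exp\bigl(F(t)M_0\bigr)$ with $F(t)=\int_0^t\kappa(s(t'))\,dt'$. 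The monodromy over one circuit of the cross-section is therefore $\exp\bigl((\oint\kappa\,ds/(r|\omega_e|))\,M_0\bigr)$, and by the rotation index theorem $\oint\kappa\,ds=\pm 2\pi$ for \emph{every} differentiable simple closed curve, convex or not; since the eigenvalues of $M_0$ are $\pm i\,\omega_0\,r|\omega_e|$, the Floquet phase equals $2\pi\omega_0=2\pi\gamma/\sqrt{1+\gamma^2}\in(0,2\pi)$ identically, independent of the shape. Replacing the continuity appeal by this exact computation closes your proof; note that this explicit solvability of a ``scalar function times a fixed matrix'' system is precisely what the paper exploits through its complex variable $z=x_2+ix_3$ and the equation $\dot z=i\eta z+1$, with $f(t)=\int_0^t\eta(s)\,ds$ advancing by a fixed non-trivial angle per period.
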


Implicit in this statement is the assumption that the particle is constrained to remain in contact with the surface.

 \begin{wrapfigure}{R}{0.4\textwidth}
\centering
\includegraphics[width=2.0in]{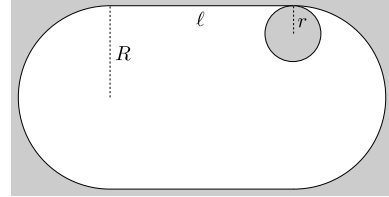}\ \ 
\caption{\label{stadium}{\small  Cross-section of the stadium cylinder  and  the rolling sphere. }}
\end{wrapfigure}

 As an illustration, consider the {\em stadium} cylinder whose cross-section, depicted in 
  Figure \ref{stadium}, consists of  two circular caps connected by parallel line segments.
 A glance at the second order equation for the height function $h$
  in Theorem \ref{equations 3 roll}  shows that the   ball is essentially in free fall (with acceleration $g/(1+\gamma^2)$)  while it rolls on the flat parts of the surface. In order to remain bounded, it  must rebound upward when it passes over the curved caps.
   Figure \ref{stadium_curve} shows a typical height function.  
   In the final section of the paper we will 
  briefly explore numerically the no-slip billiard version of this   example. It will be apparent that the question whether (and under which initial conditions) orbits remain bounded  is much more challenging when the transverse no-slip billiard system is chaotic.

  Let us return to the circular cylinder.
 Compared to  rolling motion, the behavior  of a no-slip billiard system inside a cylindrical billiard domain, in the presence of a constant force pulling the particle downward, seems to be  more subtle. On the one hand, it is possible, and typical for general cylinders, for the particle to accelerate downward (as one might   expect). See Figure \ref{falling graph}.

   \begin{figure}[htbp]
\begin{center}
\includegraphics[width=3.5in]{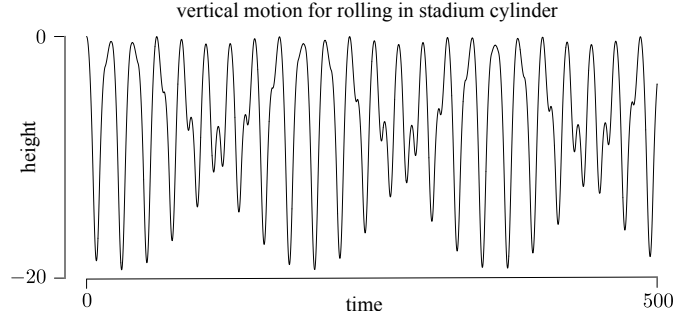}\ \ 
\caption{{\small  Height of center of mass of rolling particle in a cylinder with stadium cross-section. }}
\label{stadium_curve}
\end{center}
\end{figure}

 But   in dimension $3$ and for ordinary circular cylinders, we show that for a class of initial conditions satisfying what  we call {\em transversal rolling impact}, the particle does not fall: its position along the axis of the cylinder remains bounded.

 Prior to stating the definition of rolling impact, observe that if $(a,u,U)\in \mathcal{N}$ is the pre-collision state at a boundary point $a$,
 then the velocity of the point of contact $x= a-r\nu_a$ of the spherical billiard particle of radius $r$ and the boundary of $\mathcal{B}_0$
 is $v=u+U(x-a)=u-rU\nu_a$. We say that the collision satisfies the rolling impact condition if the component of $v$ tangent to the boundary of $\mathcal{B}$ at $a$ is zero. 
 \begin{definition}[Transversal rolling impact]\label{transversal rolling def}
 The pre-collision state $(a,u,U)\in \mathcal{N}$ will be said to satisfy the {\em rolling impact} condition if the orthogonal projection of 
 $v=u - rU\nu_a$ to $T_a\partial \mathcal{B}$  is zero. 
If the billiard domain is a cylinder (not necessarily circular) whose axis is parallel to the unit vector $e\in \mathbb{R}^n$,
  we say that  $(a, u, U)$ satisfies the {\em transversal rolling impact} condition if the orthogonal projection of
  $v$ to $e^\perp \cap T_a\partial \mathcal{B}$ is zero.
 \end{definition}

  \begin{figure}[htbp]
\begin{center}
\includegraphics[width=3.0in]{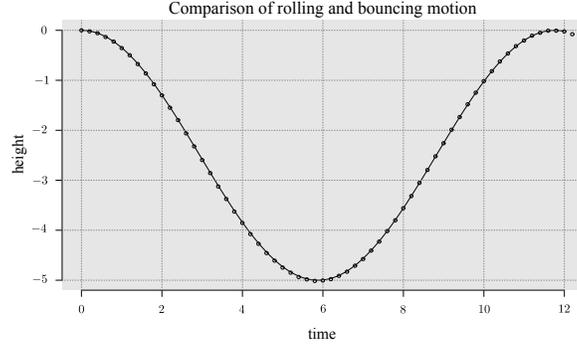}\ \ 
\caption{{\small  Comparison of the height functions for the rolling motion in a circular cylinder under a constant downward force (solid line) and the corresponding  no-slip billiard motion satisfying the transverse rolling impact condition (small circles). Initial conditions are chosen so that the two processes rotate around the cylinder at the same rate.}}
\label{roll and bounce}
\end{center}
\end{figure}

    The above definition of rolling impact is equivalent to the following: at each boundary configuration $q\in \partial M$ of the no-slip billiard,
  an initial state $v\in T_qM$ projects 
to a vector in the no-slip subspace $\mathfrak{S}_q$. Here we are referring to the orthogonal projection relative to the kinetic energy inner product on 
$T_qM$, whereas in Definition \ref{transversal rolling def} it is the ordinary Euclidean inner product that is being invoked. 
   Also notice that transversal rolling impact means that the rolling impact condition holds for the transversal billiard system, which is well-defined due to Theorem \ref{first theorem}.

 \begin{figure}[htbp]
\begin{center}
\includegraphics[width=4.0 in]{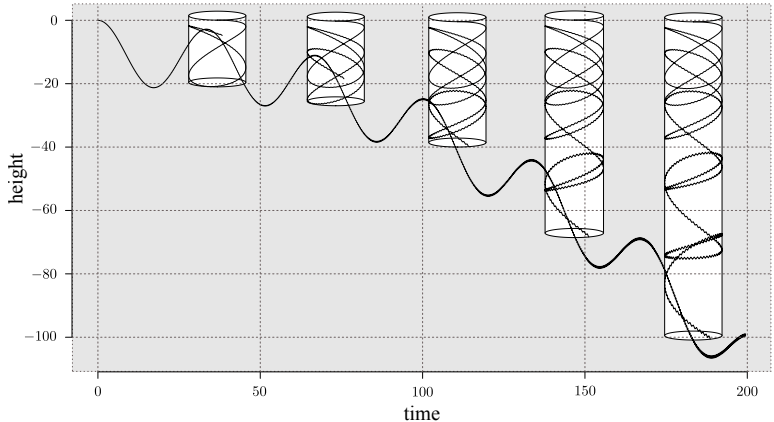}\ \ 
\caption{\small{When the transversal rolling impact condition does not hold (see Definition \ref{transversal rolling def}), the particle acquires an overall acceleration  downward. The apparent increase in thickness of the height function graph and of the particle's path is due to a small scale vertical zig-zag
motion of increasing amplitude. See also Figure \ref{cylinders comparison}.}} 
\label{falling graph}
\end{center}
\end{figure}

   For cylinders in dimension $3$, let $\tau_a$   denote the unit tangent vector to $\partial \mathcal{B}$ at $a$, oriented so that
   $\tau_a, \nu_a, e$ form a positive basis. Let   $\omega\in \mathbb{R}^3$  be the angular velocity vector.
   (We recall that $\omega$ is defined by  $\omega\times x:= Ux$ for all $x\in \mathbb{R}^3$.)
   Then the rolling impact condition is in this case expressed by the equation 
   $u-u\cdot \nu_a\nu_a = r\omega\times \nu_a $ and the transversal rolling impact condition by
\begin{equation}\label{transversal rolling impact}
u\cdot \tau_a +r\omega\cdot e=0
\end{equation}
as a simple algebraic manipulation involving the cross-product shows. We take as a measure of the failure  in satisfying this condition 
the quantity $-r\omega\cdot e/u\cdot \tau_a$ and call it the {\em transversal rolling defect} (evaluated on the initial velocities).

The following simple observation is essential.
 \begin{proposition}\label{equal times}
  Consider a two-dimensional no-slip billiard system in  a disc. If the first collision satisfies the rolling impact condition, then all subsequent collisions also do, and the times between consecutive  collisions are all equal. Furthermore, the center of mass of the moving particle undergoes specular reflection at each collision.
  \end{proposition}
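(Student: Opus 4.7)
I would work in the reduced phase space $\mathcal{N}$ of the planar no-slip billiard, writing a state at a boundary point $a$ of the disc as a triple $(u_\nu, u_\tau, \omega)$ in the $(\nu_a, \tau_a)$ basis of $\mathbb{R}^2$ together with the scalar angular velocity $\omega$ (in dimension two, $U = \omega J$ for the standard $90^\circ$ rotation $J$). The kinetic energy inner product (\ref{metric}) is diagonal in these coordinates, with $\|(u_\nu, u_\tau, \omega)\|^2 = m u_\nu^2 + m u_\tau^2 + m r^2 \gamma^2 \omega^2$. The no-slip subspace $\mathfrak{S}_a$ is then the line spanned by $(0, r, 1)$; its orthogonal complement contains the inward normal $\mathbbm{n}_a = (1, 0, 0)$; and the rolling impact condition of Definition \ref{transversal rolling def} reduces to the single linear equation $u_\tau = r\omega$ (up to an orientation sign on $\tau_a$).

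The core of the argument is an algebraic identity at a single collision. Under the rolling impact hypothesis, the pre-collision state lies in the two-dimensional subspace $W = \mathfrak{S}_a \oplus \mathbb{R}\mathbbm{n}_a$, and by Definition \ref{Cq defined} the map $C_a$ acts on $W$ as $+I$ on $\mathfrak{S}_a$ and $-I$ on $\mathbb{R}\mathbbm{n}_a \subset \mathfrak{S}_a^\perp$. A short projection computation then yields $u_\nu^+ = -u_\nu^-$, $u_\tau^+ = u_\tau^-$, and $\omega^+ = \omega^-$: the center of mass undergoes ordinary Euclidean specular reflection, the angular velocity is preserved, and the relation $u_\tau^+ = r\omega^+$ persists. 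I expect this to be the main obstacle, since the cancellation of the $\gamma^2$-weighting in the orthogonal projection onto $\mathfrak{S}_a$ is what turns the no-slip collision law into specular reflection and makes rolling impact a dynamically invariant condition rather than a generic one.

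The rest is geometric. Between collisions the planar billiard has no external force, so $u$ and $\omega$ are constant and the center of mass moves along a straight chord to the next impact $a'$. The center-of-mass dynamics is thus the integrable, rotationally symmetric specular-reflection billiard in a disc, whose consecutive impact points are equally spaced around the boundary and whose chords all have equal length. Since the condition $u_\tau = r\omega$ is invariant under the $SO(2)$-action sending $(a, u, \omega)$ to $(R_\theta a, R_\theta u, \omega)$, its validity at $a$ transports to $a'$, and induction propagates it to all subsequent collisions; energy conservation combined with the common chord length then gives equal flight times.
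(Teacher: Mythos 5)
Your proposal is correct and follows the same overall strategy as the paper: first show that a rolling-impact collision reduces to Euclidean specular reflection of the center of mass with the angular velocity unchanged, then propagate the condition around the disc. The one genuine difference is how the single-collision identity is obtained. The paper proves it (Proposition \ref{reflection}) by substituting the rolling impact relation into the explicit collision formula of Proposition \ref{no-slip map} and invoking $c_\beta+\gamma s_\beta=1$; you instead observe that the rolling-impact states are exactly $\mathfrak{S}_a\oplus\mathbb{R}\mathbbm{n}_a$ and read off the action of $C_a$ directly from its defining eigenspace structure ($+I$ on $\mathfrak{S}_a$, $-I$ on the orthogonal complement), which avoids the $c_\beta,s_\beta$ computation and makes transparent why rolling impact is dynamically invariant rather than accidental. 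One phrasing in your propagation step should be tightened: the pre-collision state at the next impact $a'$ is $(a',u^+,\omega)$ with the \emph{same} velocity $u^+$, not the rotated state $(R_\theta a, R_\theta u^+,\omega)$, so $SO(2)$-equivariance of the condition is not literally the statement you need. What you actually use is the elementary fact that a chord of a circle makes equal angles with the boundary at its two endpoints, so $u^+\cdot\tau_{a'}=u^+\cdot\tau_a=r\omega$ while $u$ and $\omega$ are constant in flight; this is precisely the step the paper takes when it writes $u^-\cdot\tau_a=u^+\cdot\tau(a')$. That is a one-line repair, not a gap, and the remainder (equal chords plus conserved speed giving equal flight times) is fine.
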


  The main theorem for no-slip billiards in circular cylinders is now the following.

  \begin{theorem}\label{main}
Consider a no-slip billiard system in a circular cylinder in $\mathbb{R}^3$ whose moving particle is subject to a constant force directed along the axis of the cylinder. If the first collision satisfies the transversal rolling impact condition and the first flight segment does not go through the  axis of the cylinder, then the particle's trajectory is bounded. 
 \end{theorem}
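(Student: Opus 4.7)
My plan is to decouple the motion into transverse (two-dimensional, in the cross-sectional disc) and longitudinal parts, then reduce boundedness of the vertical coordinate to an eigenvalue analysis of an orthogonal map on the three longitudinal velocity variables, coupled with energy conservation.

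First, I would invoke Theorem \ref{first theorem} to project the orbit to a no-slip billiard in the cross-sectional disc, which inherits the same parameter $\gamma$. Transversal rolling impact in 3D is equivalent to the 2D rolling impact condition for this projected motion (both being the vanishing of the $\tau_a$-component of the contact-point velocity), so Proposition \ref{equal times} applies and gives: rolling impact persists at every collision, the inter-collision time $\Delta t$ is constant, and the transverse center of mass reflects specularly. Hence the transverse orbit is an inscribed polygonal path of equal chords in the effective disc, with consecutive adapted frames $(\tau_k,\nu_k)$ differing by a fixed planar rotation of angle $\psi$ about the cylinder axis; the hypothesis that the first flight misses the axis gives $\psi\ne\pi$.

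Second, using the longitudinal/transverse decomposition of Proposition \ref{longitudinal equations}, I focus on the longitudinal state $X=(u_e, r\omega_\tau, r\omega_\nu)$ equipped with the kinetic-energy metric $m\,\mathrm{diag}(1,\gamma^2,\gamma^2)$. The longitudinal component of the no-slip bundle is the plane $\{u_e = r\omega_\tau\}$ (with $r\omega_\nu$ free), and the no-slip collision is the metric reflection across it, acting on $(u_e, r\omega_\tau)$ as the involution
\begin{equation*}
R' \;=\; \frac{1}{1+\gamma^2}\begin{pmatrix} 1-\gamma^2 & 2\gamma^2 \\ 2 & \gamma^2-1\end{pmatrix},
\end{equation*}
with determinant $-1$ and trace $0$. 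During each flight the constant vertical force contributes $-g\Delta t$ to $u_e$, while $\omega$ is preserved in the global frame; the rotation of the adapted frame by $\psi$ produces a planar rotation of $(r\omega_\tau, r\omega_\nu)$ by $-\psi$. Composing the collision and flight maps gives a pre-collision affine recursion $X_{k+1} = M_{\mathrm{lin}}X_k + b$ with $b=(-g\Delta t,0,0)$ and $M_{\mathrm{lin}}$ the product of $R'$ (extended by the identity on $r\omega_\nu$) with the planar rotation, hence a kinetic-energy isometry.

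The central step is to prove $\ker(M_{\mathrm{lin}} - I) = \{0\}$. Writing out the fixed-point equation in the three coordinates and using $\sin\psi/(1-\cos\psi) = \cot(\psi/2)$ to eliminate $r\omega_\nu$ reduces the problem to a $2\times 2$ system in $(u_e, r\omega_\tau)$ whose coefficient determinant equals $-4\gamma^2/(1+\gamma^2)\ne 0$, so the kernel is trivial. Hence the affine map has a unique fixed point $X^{\ast}$, and since $M_{\mathrm{lin}}$ is an isometry, $\|X_k - X^{\ast}\|$ is constant, so the longitudinal kinetic energy is uniformly bounded along the orbit. The transverse kinetic energy is itself constant (specular reflection preserves the transverse speed and $\omega_e$ is invariant at every collision); hence conservation of total mechanical energy $\tfrac12 m\|(u,\omega)\|^2 + mgh = E$ forces $h$ to remain bounded. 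The main obstacle is the kernel computation, where the assumption that the first flight avoids the axis—ensuring $\psi\ne\pi$—is what keeps the frame-rotation factor in $M_{\mathrm{lin}}$ non-degenerate.
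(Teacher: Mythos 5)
Your proposal is correct in substance, but it reaches boundedness by a genuinely different final step than the paper. The setup coincides: you use Theorem \ref{first theorem}, Proposition \ref{equal times}, and the longitudinal recursion of Proposition \ref{longitudinal equations}, and your $M_{\mathrm{lin}}$ is precisely the paper's $\mathcal{M}=\mathcal{A}_0\mathcal{R}^{-1}$ written in the co-rotating frame in the variables $(u_e,r\omega_\tau,r\omega_\nu)$; your $R'$ agrees with the block $\bigl(\begin{smallmatrix} c_\beta & s_\beta\gamma\\ s_\beta/\gamma & -c_\beta\end{smallmatrix}\bigr)$ obtained from Proposition \ref{no-slip map}, and your elimination is right: the fixed-point system reduces to a $2\times 2$ system with determinant $\pm 4\gamma^2/(1+\gamma^2)\neq 0$, so $1\notin\operatorname{spec}(M_{\mathrm{lin}})$. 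The divergence is afterwards: the paper sums the series for $h_\ell$ explicitly and exhibits an exact cancellation of the two linearly growing contributions (the free-fall term $-\ell t^2 g/2$ against the terms produced by the $\Pi_-$ and rotation blocks of $\mathcal{M}$), never using energy; you instead note that the affine recursion has a unique fixed point, that the isometry property pins the collision velocities to a fixed sphere about it, and that conservation of total energy then traps $h$. That is shorter and more conceptual, exactly the kind of invariant-based argument the authors say they lacked for the cylinder in their discussion of Theorem \ref{two plates bounded}. Two caveats, neither fatal for the stated hypotheses. First, the energy step needs the constant force to be nonzero; if $g=0$, bounded velocities alone do not bound $h$, but your kernel fact repairs this immediately, since then $h_\ell-h_0=t\,\mathbbm{1}^\dagger\sum_{j<\ell}M_{\mathrm{lin}}^{j}X_0$ and $\sum_{j<\ell}M_{\mathrm{lin}}^{j}=(I-M_{\mathrm{lin}})^{-1}\bigl(I-M_{\mathrm{lin}}^{\ell}\bigr)$ is bounded. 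Second, your attribution of the axis-avoidance hypothesis is off: your determinant is independent of $\psi$, and the elimination only requires $\psi\neq 0$, which is automatic for any genuine second collision; $\psi=\pi$ causes no degeneracy in your argument (consistent with the fact that a flight through the axis confines the motion to a vertical strip, where Theorem \ref{two plates bounded} already gives boundedness). It is the paper's proof, not yours, that needs that hypothesis, because its eigenbasis $e_0,e_1,e_2$ degenerates when $\nu_1=-\nu_0$.
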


 Figure \ref{roll and bounce} shows that the rolling motion and the billiard motion consisting of a sequence of short no-slip bounces, under the assumption  that the initial velocities satisfy the transversal rolling impact condition are, in fact, very close.

One may be inclined to think that the equations for the billiard motion are a simple-minded discretization of the differential equation for the rolling motion,  or that the latter
is a straightforward  limit of the former, but this seems  not to be the case. 
It is essential here to bear in mind the phenomenon illustrated in Figure \ref{falling}, which shows that if the transversal rolling defect is not $1$, then in the continuous limit one obtains a kind of motion that appears to be smooth but is   very different from that of solutions of the rolling  differential equation.  The small scale zig-zag motion shown more clearly in Figure \ref{cylinders comparison}, which has  a close-up of segments of trajectories highlighting  the effect of introducing a small transversal rolling defect, suggests a potential difficulty to overcome. It would be most interesting to find a limit differential equation containing this rolling defect as an equation  parameter, and the rolling equations as a special case when the parameter is $1$. Such an equation would, hopefully, suggest   a possible physical interpretation of this key parameter.  We leave this as a problem to be addressed in a future work.

 \begin{figure}[htbp]
\begin{center}
\includegraphics[width=3.5 in]{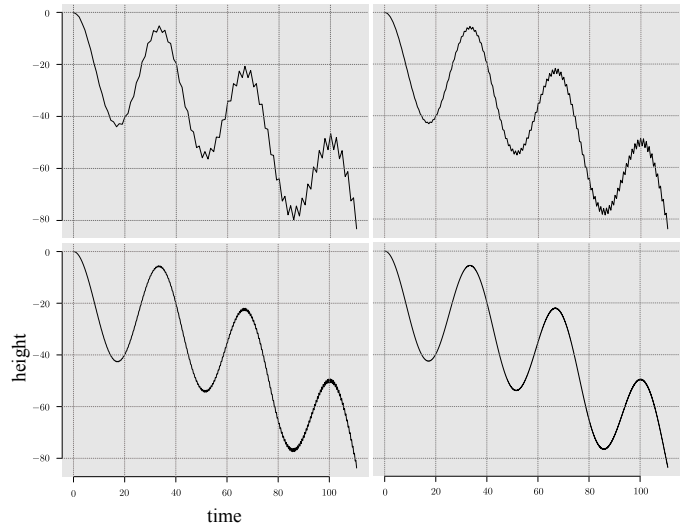}\ \ 
\caption{\small{These graphs correspond to a fixed initial transversal rolling defect $-r\omega\cdot e/u\cdot\tau_a=1.15.$ (When the  transversal rolling impact initial condition holds, this value is $1$.) 
Here $\omega\cdot e$ is the longitudinal component of the initial angular velocity vector and $u\cdot \tau_a$ is the tangential (to the boundary of the billiard domain) component of the center of mass velocity.
As the intercollision flight becomes shorter and motion grazes the cylinder more and more closely,
the height function becomes smooth but the falling rate remains essentially unchanged.}} 
\label{falling}
\end{center}
\end{figure}

The proofs of the above theorems and propositions, and of some of the more general facts to be stated shortly that are not restricted to dimension $3$,  will be given in the subsequent sections. Although our 
more complete observations pertain to $3$ dimensional domains, we have chosen to state and prove our results, whenever we can,  in arbitrary dimensions. We believe that this subject touches on a number of questions of geometric/dynamical interest,  for example, concerning  a possible theory of discrete non-holonomic systems, for which it would be too restrictive to remain in dimension $3$. Partly for this reason, but also  for the sake of completeness, we have also included reasonably detailed proofs of properties of rolling motion, such as the fact that   general cylinders in dimension $3$ have bounded orbits, even though this is a classical fact. We believe that our more geometric approach, which avoids relying too much on background material from mechanics  (like the use  of such concepts as Coriolis torque, for example; see \cite{gualtieri}) and highlights the central role   of the Euclidean group,  is worthwhile recording.

\begin{figure}[htbp]
\begin{center}
\includegraphics[width=3 in]{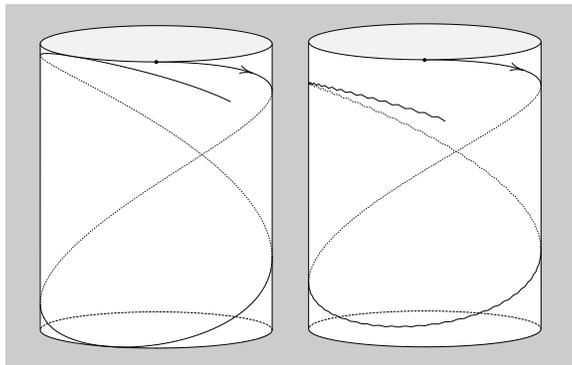}\ \ 
\caption{\small{Near grazing paths of no-slip billiard particle. On the left-hand side, the transversal rolling impact condition holds for the initial bounce, but on the right-hand side a small deviation of this condition is introduced. Notice the characteristic zig-zag nature of the curve and that it does not return all the way to the initial height. The starting center of mass position is indicated by the small black dot.}} 
\label{cylinders comparison}
\end{center}
\end{figure}

\section{More definitions and basic facts} \label{sec:more definitions and basic facts}
 There are two Riemannian metrics we need to consider: the one  on $M$ defined above by (\ref{metric}), and the metric on the hypersurface $S=\partial \mathcal{B}$ induced (by restriction) from the Euclidean metric in $\mathbb{R}^n$. 
 Although the context should make it clear which  is being referred to at any given moment, for example, when stating that vectors have unit length or are mutually orthogonal, we will always refer to the former as the  {\em kinetic energy metric}.

  We define the {\em cross-product} in $\mathbb{R}^n$ as the bilinear map $(a,b)\in \mathbb{R}^n\times \mathbb{R}^n\mapsto a\wedge b\in \mathfrak{so}(n)$ taking on values in the Lie algebra $\mathfrak{so}(n)$ of the rotation group, given by 
  $$u\mapsto (a\wedge b)u:= (a\cdot u)b-(b\cdot u)a$$   for all  $u\in \mathbb{R}^n.$ If $a$ and $b$ are unit orthogonal vectors and $V$ is the plane linearly spanned by $a$ and $b$, then
   $$\exp(\theta(a\wedge b))=\Pi^{\perp}+ (\cos \theta)\Pi +(\sin \theta) a\wedge b\in SO(n)$$ is a rotation matrix that restricts to the identity transformation on $V^\perp$ and rotates vectors in $V$ by angle $\theta$, where $\Pi$ and $\Pi^\perp$ are the orthogonal projections to $V$ and $V^\perp$, respectively.  (We also use  $\Pi$, possibly with additional sub- or superscripts, for    other orthogonal projections
   to appear  in the course of this paper. It will be clear in each case to which projection we are referring.) It is useful to note that
    \begin{equation}\label{wedge1}\frac12 \text{Tr}\left((a\wedge b)(c\wedge d)^\dagger\right) = (a\cdot c) (b\cdot d) - (a\cdot d)( b\cdot c) \end{equation}
   where $\dagger$ denotes transpose.
   If  moreover $U$  is an $n\times n$ matrix then
\begin{equation}\label{wedge2}\frac12 \text{Tr}\left(U(a\wedge b)^\dagger\right) = (Ua)\cdot b. \end{equation} 
   Notice that, if $n=3$, $$(a\wedge b) u=(a\times b)\times u,$$ where $\times$ is the ordinary cross-product of vector algebra.

  The kinetic energy Riemannian metric (\ref{metric}) is a product metric on $M=SO(n)\times \mathcal{B}$. The trace part
 is a bi-invariant  metric on the rotation group.   It is a standard and easy to prove fact that the Levi-Civita connection on $SO(n)$ associated to this metric satisfies
 $$\nabla_XY=\frac12[X,Y]$$
  for any pair $X,Y$ of left-invariant vector fields.  The following proposition is also a standard fact about bi-invariant metrics.

  \begin{proposition}\label{derivative}
  Let $A(t)$ be a smooth path in $SO(n)$ where the rotation group is given the above bi-invariant trace metric.  Define $U(t)=A(t)^{-1}\dot{A}(t)\in \mathfrak{so}(n)$, where
  $\dot{A}$ indicates derivative in $t$ of the matrix-valued function. Then 
  $$ \frac{\nabla \dot{A}}{dt}= A\dot{U}.$$
  In particular, $A(t)$ is a geodesic iff $U(t)$ is constant and $A(t)=A(0)e^{tU}.$
  \end{proposition}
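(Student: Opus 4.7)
The plan is to exploit the formula $\nabla_X Y = \tfrac{1}{2}[X,Y]$ for left-invariant vector fields, already stated in the excerpt, by writing $\dot A(t)$ as a time-varying combination of left-invariant vector fields evaluated along the curve and then applying the Leibniz rule for covariant derivatives along a curve.

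First I would fix a basis $E_1,\dots,E_N$ of $\mathfrak{so}(n)$ and denote by $\widetilde E_i$ the left-invariant vector field on $SO(n)$ with $\widetilde E_i(A)=AE_i$. Writing $U(t)=\sum_i u_i(t)E_i$, the identity $\dot A(t)=A(t)U(t)$ becomes
\begin{equation*}
\dot A(t)=\sum_i u_i(t)\,\widetilde E_i\bigl(A(t)\bigr).
\end{equation*}
Then the standard Leibniz rule for the covariant derivative along the curve gives
\begin{equation*}
\frac{\nabla \dot A}{dt}=\sum_i \dot u_i(t)\,\widetilde E_i(A(t)) + \sum_{i,j} u_i(t)u_j(t)\,\nabla_{\widetilde E_j}\widetilde E_i\bigl(A(t)\bigr).
\end{equation*}

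Next I would invoke $\nabla_{\widetilde E_j}\widetilde E_i=\tfrac{1}{2}[\widetilde E_j,\widetilde E_i]=\tfrac{1}{2}\widetilde{[E_j,E_i]}$. The second sum becomes $\tfrac{1}{2}\sum_{i,j}u_iu_j\widetilde{[E_j,E_i]}(A(t))$, which vanishes since $u_iu_j$ is symmetric in $i,j$ while $[E_j,E_i]$ is antisymmetric. The first sum is exactly $\widetilde{\dot U(t)}(A(t))=A(t)\dot U(t)$, so $\frac{\nabla\dot A}{dt}=A\dot U$ as claimed.

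For the geodesic assertion, since $A(t)\in SO(n)$ is invertible, $A\dot U=0$ is equivalent to $\dot U\equiv 0$, i.e.\ $U(t)\equiv U(0)=:U$ constant. In that case $\dot A=AU$ is a linear matrix ODE with unique solution $A(t)=A(0)e^{tU}$, and conversely this curve clearly has constant $U$. The only mild subtlety is justifying the Leibniz-rule computation (which requires only that $\nabla$ is an affine connection and uses $\nabla_{\widetilde E_j}\widetilde E_i$ at the single point $A(t)$ rather than along a vector field extension); this is routine given the setup. I expect no real obstacles, the content being essentially the well-known fact that for a bi-invariant metric the geodesics through the identity are the one-parameter subgroups.
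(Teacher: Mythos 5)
Your proof is correct and follows essentially the same route as the paper's: expand $\dot A$ in a left-invariant frame, apply the Leibniz rule along the curve, use $\nabla_XY=\tfrac12[X,Y]$ for left-invariant fields, and kill the quadratic term by the symmetry/antisymmetry argument. The only cosmetic difference is that the paper uses an orthonormal frame with coefficients $\langle U,e_i\rangle$ where you use a general basis with coefficients $u_i(t)$, and you spell out the geodesic consequence slightly more explicitly.
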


    We often  find it convenient   to express tangent vectors  to the Euclidean group $SE(n)$
    at a point $(A,a)$
     in the form
    $(UA, u)$, where $U\in \frak{so}(n)$ and $u\in \mathbb{R}^n$.     In this form, the vector is obtained by a right-translation from the identity $(I,0)$ to $(A,a)$ of an element of the
    Lie algebra $\mathfrak{se}(n)$ of $SE(n)$.  
   The reader should be attentive to the distinction between $(UA,u)$ and $(AU,Au)$. The latter arises when we wish to think of $(U,u)\in \mathfrak{so}(n)$ as an infinitesimal rotation expressed in the so-called {\em body frame}; on the other hand, when writing a tangent  vector as $(UA, u)$,  one has the following interpretation:
  a material point $b\in D$ which in configuration $(A,a)$ is at $x=Ab+a\in \mathbb{R}^n$, will have velocity at $x$ equal to
  $U(x-a)+u.$ In particular, $a$ and $u$ are, respectively,  the position  and the velocity of the center  of mass of the ball $D_0(r)$ in the {\em state} $(UA,u)\in T_{(A,a)}M$.

    If $A(t)$ is a smooth curve in $SO(n)$ and $U(t)$ is now defined by $U(t)= \dot{A}(t)A(t)^{-1}$, then
 by Proposition \ref{derivative}   $$A^{-1}\frac{\nabla \dot{A}}{dt}= \frac{d}{dt}\left(A^{-1}\dot{A}\right)= \frac{d}{dt}\left(A^{-1}U A\right)= -\left(-A^{-1}\dot{A}A^{-1}\right)UA + A^{-1}\dot{U}A+ A^{-1}U\dot{A},$$
    and the last term simplifies to $-A^{-1}U^2A+A^{-1}\dot{U}A+ A^{-1}U^2A=A^{-1}\dot{U}A.$ Thus we immediately have the following remark:
    \begin{proposition}\label{derivatives}
    For a smooth curve $q(t)=(A(t),a(t))$   in $M$, write $\dot{q}=(U(t)A(t), u(t))$, where $u=\dot{a}$ and $U=\dot{A}A^{-1}$.  Then the acceleration
    of $q(t)$ relative to the Levi-Civita connection $\nabla$ of the left-invariant (kinetic energy) metric (\ref{metric}) is 
    $$ \frac{\nabla \dot{q}}{dt}=\left(\dot{U}A, \dot{u}\right).$$
    When the focus is on the geometry of the boundary of $M$ with the induced metric, and $q(t)\in S:= \partial M$, then $\dot{u}$ is replaced with $\frac{D u}{dt}=\dot{u}-(\dot{u}\cdot \nu_a) \nu_a $ where $D$ is the Levi-Civita connection of the hypersurface $S\subset \mathbb{R}^n$ and $\nu_a$ is a unit normal vector to $S$ at $a$. 
    \end{proposition}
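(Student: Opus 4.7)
The plan is to exploit the product structure of the kinetic energy metric (\ref{metric}), which is a direct sum of the bi-invariant trace metric on $SO(n)$ and the Euclidean metric on $\mathcal{B}$. Accordingly, the Levi-Civita connection $\nabla$ splits, and I can compute each component of $\frac{\nabla \dot{q}}{dt}$ independently.

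For the translation factor, the Euclidean connection is flat, so along $a(t)\in \mathcal{B}$ the covariant derivative reduces to the ordinary second derivative: $\ddot{a}=\dot{u}$. For the rotation factor, I would invoke the calculation already carried out in the paragraph immediately preceding the proposition, which shows that with $U=\dot{A}A^{-1}$ one has $\frac{\nabla \dot{A}}{dt}=\dot{U}A$. This is the right-translated (space-frame) counterpart of Proposition \ref{derivative}: one conjugates $U$ back to the body-frame element $A^{-1}\dot{A}$, applies Proposition \ref{derivative}, and observes that the $A^{-1}U^2A$ cross terms cancel. Assembling the two factors gives $\frac{\nabla \dot{q}}{dt}=(\dot{U}A,\dot{u})$.

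For the second assertion, once $a(t)\in S=\partial \mathcal{B}$, the induced metric on $\partial M=SO(n)\times S$ is again a product: the trace metric on $SO(n)$ is unchanged, and on $S$ one has the restriction of the Euclidean metric from $\mathbb{R}^n$. The rotation component of $\frac{\nabla \dot{q}}{dt}$ is therefore unaffected, while on the base the Gauss formula identifies the intrinsic Levi-Civita connection $D$ of the embedded hypersurface $S\subset \mathbb{R}^n$ with the tangential projection of the ambient flat connection. Projecting $\dot{u}$ onto $T_aS$ yields exactly $\frac{Du}{dt}=\dot{u}-(\dot{u}\cdot \nu_a)\nu_a$.

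No step presents a real obstacle: the rotation-group identity is already derived in the text above the proposition, and everything else is immediate from the product structure of the metric together with the Gauss formula for an embedded hypersurface. The main bookkeeping care is simply to keep straight that the vector $(UA,u)$ expresses $\dot{q}$ via right translation from the identity, so that the formula $\frac{\nabla \dot{A}}{dt}=\dot{U}A$ (rather than $A\dot{U}$) is the relevant one.
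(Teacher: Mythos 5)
Your proposal is correct and follows essentially the same route as the paper: the paper's own justification is precisely the inline computation preceding the proposition (conjugating to the body frame, applying Proposition \ref{derivative}, and cancelling the $A^{-1}U^2A$ terms), combined with the product structure of the metric, with the boundary statement left as the standard tangential-projection (Gauss formula) fact that you spell out.
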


    Recall from Definition \ref{no slip bundle} the no-slip bundle $\mathfrak{S}$.
     A simple computation gives the explicit form of its orthogonal complement  relative to the kinetic energy metric. Notice that the unit normal vector to
    $\partial M$ pointing into $M$ is $\mathbbm{n}(A,a)= \frac1{\sqrt{m}} (0, \nu_a)$, where $m$ is the mass of the moving particle. Clearly, $\mathbbm{n}(q)$ lies  in that orthogonal complement. However, in what follows, it will be convenient  to reserve the notation $\mathfrak{S}^\perp_q$ for the orthogonal vectors to $\mathfrak{S}_q$ contained in $T_q(\partial M)$.  Thus defined, we have
    \begin{equation}\label{Sperp}\mathfrak{S}^\perp_{(A,a)}=\left\{\left(\frac1{r\gamma^2} (w\wedge \nu_a) A,w\right): w\in T_aS\right\}.
    \end{equation}
    Properties (\ref{wedge1}) and (\ref{wedge2}) of the product $\wedge$ are useful for verifications of this kind.

  We give now a more explicit description of the no-slip collision map on the reduced phase space $\mathcal{N}$ introduced in Definition \ref{Cq defined}. For details we refer the reader to 
  \cite{CF}. The abbreviations $c_\beta$ and $s_\beta$ will be used throughout the rest of the paper for the   quantities:
 $$c_\beta:=\cos\beta:=\frac{1-\gamma^2}{1+\gamma^2}, \ \ s_\beta:=\sin\beta := \frac{2\gamma}{1+\gamma^2}.$$
The angle $\beta$  is defined by these relations. When there is no chance of confusion we may simply write $c, s$. 
Recall that $\nu_a$ is the unit inward pointing normal vector to $S$.

  \begin{proposition}[No-slip collision map, \cite{CF}]\label{no-slip map}   For each $a\in S$, the no-slip collision map at $a$ is the linear map $C_a:\mathcal{N}_a\rightarrow \mathcal{N}_a$  such that
$$C_a(u,U)= \left(c_\beta u -\frac{s_\beta}{\gamma} (u\cdot \nu_a) \nu_a + s_\beta\gamma r U\nu_a    ,\frac{s_\beta}{\gamma r}\nu_a\wedge u + U -  \frac{s_\beta}{\gamma}\nu_a \wedge U \nu_a\right). $$
  \end{proposition}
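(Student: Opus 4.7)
The plan is to read $C_a$ directly off Definition \ref{Cq defined}: being the orthogonal involution of $T_qM$ whose $(+1)$-eigenspace is $\mathfrak{S}_q$, it equals $C_a = 2P_{\mathfrak{S}_q} - I$, where $P_{\mathfrak{S}_q}$ denotes orthogonal projection in the kinetic energy metric (\ref{metric}). The whole task thus reduces to computing this projection applied to a generic $v = (UA,u) \in T_qM$, and then repackaging $C_a(v)$ in the reduced coordinates $(u,U)$.

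First I would peel off the normal direction. Since $\mathbbm{n}_q = m^{-1/2}(0,\nu_a)$ is orthogonal to $\mathfrak{S}_q \subset T_q\partial M$, the piece $(0,(u\cdot \nu_a)\nu_a)$ of $v$ is simply reversed by $C_a$, and only the tangential part $(UA, u^\parallel)$ with $u^\parallel := u - (u\cdot \nu_a)\nu_a \in T_aS$ needs to be decomposed along $T_q\partial M = \mathfrak{S}_q \oplus \mathfrak{S}^\perp_q$. Using the descriptions (\ref{S}) and (\ref{Sperp}), I write
$$(UA, u^\parallel) = (\tilde U A, r\tilde U \nu_a) + \left(\tfrac{1}{r\gamma^2}(w\wedge \nu_a)A,\, w\right), \qquad \tilde U \in \mathfrak{so}(n),\ w \in T_aS,$$
which amounts to the coupled equations $U = \tilde U + \tfrac{1}{r\gamma^2}(w\wedge \nu_a)$ and $u^\parallel = r\tilde U \nu_a + w$. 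Applying the first to $\nu_a$ and using the identity $(w\wedge \nu_a)\nu_a = -w$ (valid because $w\cdot \nu_a = 0$) gives $w = r\gamma^2(\tilde U - U)\nu_a$; combining with the second yields
$$\tilde U \nu_a = \frac{u^\parallel + r\gamma^2 U\nu_a}{r(1+\gamma^2)}, \qquad w = \frac{\gamma^2}{1+\gamma^2}\bigl(u^\parallel - rU\nu_a\bigr).$$

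The image $C_a(v)$ then has translation component $2r\tilde U\nu_a - u^\parallel - (u\cdot \nu_a)\nu_a$ and rotation component $\tilde U - \tfrac{1}{r\gamma^2}(w\wedge \nu_a)$, which I would rewrite as $U - \tfrac{2}{r\gamma^2}(w\wedge \nu_a)$ using the first matching equation. Substituting the closed forms above and simplifying via the identities $\tfrac{2}{1+\gamma^2} = 1+c_\beta = \tfrac{s_\beta}{\gamma}$ and $\tfrac{2\gamma^2}{1+\gamma^2} = s_\beta \gamma$, together with $u^\parallel \wedge \nu_a = u\wedge \nu_a$ (since $\nu_a \wedge \nu_a = 0$), reproduces the asserted formula. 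The main obstacle is purely the bookkeeping in the decomposition: one has to recognize that it suffices to extract $\tilde U\nu_a$ and $w$ from the linear system, since both the translational output and the rotational output (which is cleanest when written as $U$ minus twice the $\mathfrak{S}^\perp_q$-component) are then immediately available. Once this is noted, the remainder is routine algebra with the trigonometric identities relating $c_\beta$, $s_\beta$ and $\gamma$.
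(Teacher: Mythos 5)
Your proposal is correct: since the paper itself does not prove this proposition (it is recalled from \cite{CF}), your derivation supplies the natural self-contained argument, namely writing $C_a=2P_{\mathfrak{S}_q}-I$ from Definition \ref{Cq defined}, splitting a general vector into its $\mathbbm{n}_q$-component plus a $T_q\partial M$-part, and decomposing the latter via (\ref{S}) and (\ref{Sperp}). I checked the linear system: the identity $(w\wedge\nu_a)\nu_a=-w$, the resulting expressions $\tilde U\nu_a=\frac{u^\parallel+r\gamma^2 U\nu_a}{r(1+\gamma^2)}$ and $w=\frac{\gamma^2}{1+\gamma^2}(u^\parallel-rU\nu_a)$, and the final simplification using $1+c_\beta=\frac{s_\beta}{\gamma}$, $\frac{2\gamma^2}{1+\gamma^2}=s_\beta\gamma$ and the antisymmetry of $\wedge$ all reproduce exactly the stated formula (and, as a consistency check, yield Proposition \ref{reflection} when $u^\parallel=rU\nu_a$); the only implicit ingredient, that the decomposition exists and is unique, is guaranteed by the paper's assertion that (\ref{Sperp}) is the orthogonal complement of $\mathfrak{S}_q$ inside $T_q\partial M$, and is in any case exhibited constructively by your solution of the system.
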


 Finally, let us also recall the following definition. 
  \begin{definition}[Shape operator]\label{shape}
 The {\em shape operator} $\mathbb{S}_a$ of the hypersurface $S\subset \mathbb{R}^n$ at $a$ is   the symmetric  linear transformation of  $T_aS$ mapping  $v$ to  $-D_v\nu$, where $\nu$ is the vector field along $S$ of normal vectors and
  $D_v$ is  being used here for     the  ordinary directional differentiation of $\mathbb{R}^n$-valued functions.  (Elsewhere in the paper we also use $D$  for the  Levi-Civita connection on $S$ relative to t he metric induced by the ordinary dot product.) The unit eigenvectors of $\mathbb{S}_a$ are called the {\em principal vectors} at $a$, and the eigenvalues are the {\em principal curvatures.}
  \end{definition}

  \section{Rolling motion on   cylinders} \label{sec:rolling motion on cylinders}
In this  section we review general facts about rolling and give a self-contained discussion of the fact that orbits of the rolling motion 
on $3$-dimensional cylinders with general cross-section. This is a classical result in non-holonomic dynamics, but we wish briefly to 
re-derive it here (in the present more geometric setting) so as to   see more clearly the parallels with  similar bounded motion of the   no-slip billiard counterpart.  Throughout the section $e$ will denote the axis  vector of the cylinder, as in Definition \ref{cylinder}.  Moreover, $\nu(a) = \nu_a$ continues to denote the inward pointing unit normal vector.    In order to simplify notation, we will write $\nu$ except when emphasizing the dependence on boundary point.

The rolling particle is  subject to a force $f$ (a vector field on $M$), assumed to have zero $\mathfrak{so}(n)$ component
and  $\mathbb{R}^n$ component $\varphi e$, where $\varphi=-m g$ is constant and $g$ is interpreted  as the acceleration due to gravity. Such an  $f$ does not directly affect the angular velocities and can be thought to act   on the center of mass of the moving particle.

  Returning to the constrained Newton's equation of Definition \ref{Newton}, notice that due to the description of $\mathfrak{S}^\perp$ in  Equation (\ref{Sperp}), 
  the constraint force $N\in \mathfrak{S}^\perp$ at $q=(A,a)$ has the form $$N=\left(\frac1{r\gamma^2}\zeta \wedge \nu A, \zeta\right) $$
  where $\zeta=\zeta(q,\dot{q})\in T_aS$ must  be  determined from  the  constraint $\dot{q}\in \mathfrak{S}_q$. 
  Using Proposition \ref{derivatives},  and writing $\dot{q}=(UA, u)$, where $U\in \mathfrak{so}(n)$ and $u\in T_aS$, we can split Newton's equation into separate equations for the angular velocity matrix $U$ and center of mass velocity $u$:
  \begin{equation}\label{Newtonsplit}
  \dot{U}=\frac{1}{r\gamma^2} \zeta \wedge \nu, \ \ \frac{Du}{dt}= -g e +\zeta.
  \end{equation}
  As before, $Du/dt$ refers to the Levi-Civita connection on $S$ relative to the metric induced by the standard Euclidean metric (dot product) on $\mathbb{R}^n$.  The first equation and the definition of $\wedge$ imply
  \begin{equation}\label{zeta}\zeta = -r\gamma^2 \dot{U} \nu\end{equation}
 while  the constraint on $\dot{q}$  (recall the expression for $\mathfrak{S}$ given in (\ref{S})) implies
  \begin{equation}\label{uconstraint}
  u = rU\nu.
  \end{equation}

  It is not difficult to solve for $\zeta$ and  obtain an  explicit differential equation for $U(t)$. This is done in the next proposition.

  \begin{proposition} \label{deNewton}The path  $q(t)=(A(t), a(t))$ that solves the  constrained Newton's equation for the rolling process  is the solution to
   an initial value problem for the system of the differential equations
$$
  \dot{a}=r U \nu_a, \ \ 
  \dot{A}=UA, \ \ 
  \dot{U}=-\frac1{(1+\gamma^2)r} \left( {r^2}U\mathbb{S}_a U\nu_a -{g}e \right)\wedge \nu_a. 
$$
  \end{proposition}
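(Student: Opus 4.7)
The first two equations are immediate from the set-up: writing the tangent vector $\dot q = (UA, u)$ at $(A,a)$ gives $\dot A = UA$ by definition of $U$, and the rolling constraint (\ref{uconstraint}) combined with $\dot a = u$ gives $\dot a = rU\nu_a$. So the content of the proposition lies in the third equation, which I obtain by solving explicitly for the Lagrange multiplier $\zeta$ in terms of the current state $(A,a,U)$.

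The plan is to start from the split Newton system (\ref{Newtonsplit}). Applying the angular equation to $\nu_a$ and using $(\zeta \wedge \nu_a)\nu_a = -\zeta$ (which holds because $\zeta \in T_a S$ is orthogonal to $\nu_a$, so the formula $\nu\cdot\nu = 1$ kills the first term of the wedge) already gives $\zeta = -r\gamma^2 \dot U \nu_a$, as recorded in (\ref{zeta}). It therefore suffices to compute $\dot U\nu_a$ using the translational equation $Du/dt = -ge + \zeta$. Differentiating the constraint $u = rU\nu_a$ along the path $a(t)$ and using $\dot \nu_a = D_u \nu = -\mathbb{S}_a u$ (the definition of the shape operator) yields
\begin{equation*}
\dot u \;=\; r\dot U\nu_a + rU\dot\nu_a \;=\; r\dot U\nu_a - rU\mathbb{S}_a u .
\end{equation*}

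The next step is to compute the tangential projection $Du/dt = \dot u - (\dot u\cdot\nu_a)\nu_a$. The term $(r\dot U\nu_a)\cdot \nu_a$ vanishes because $\dot U$ is antisymmetric. For the remaining piece, antisymmetry of $U$ together with the constraint $U\nu_a = u/r$ give
\begin{equation*}
(U\mathbb{S}_a u)\cdot \nu_a \;=\; -\mathbb{S}_a u\cdot U\nu_a \;=\; -\tfrac{1}{r}\mathbb{S}_a u\cdot u,
\end{equation*}
so $\dot u\cdot \nu_a = \mathbb{S}_a u \cdot u$ and $Du/dt = r\dot U\nu_a - rU\mathbb{S}_a u - (\mathbb{S}_a u\cdot u)\nu_a$.

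Substituting this expression and $\zeta = -r\gamma^2 \dot U\nu_a$ into $Du/dt = -ge + \zeta$ and collecting the $\dot U\nu_a$ terms on the left gives
\begin{equation*}
r(1+\gamma^2)\dot U\nu_a \;=\; rU\mathbb{S}_a u + (\mathbb{S}_a u\cdot u)\nu_a - ge .
\end{equation*}
Plugging this into $\dot U = (r\gamma^2)^{-1}\zeta \wedge \nu_a = -(1+\gamma^2)^{-1}r^{-1}\bigl[rU\mathbb{S}_a u + (\mathbb{S}_a u\cdot u)\nu_a - ge\bigr]\wedge \nu_a$ and observing that $\nu_a\wedge \nu_a = 0$ eliminates the normal term $(\mathbb{S}_a u\cdot u)\nu_a$; finally substituting $u = rU\nu_a$ into the surviving $rU\mathbb{S}_a u$ gives $r^2 U\mathbb{S}_a U\nu_a$, producing the stated formula. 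The one delicate point in this calculation is the evaluation of $\dot u \cdot \nu_a$, where the antisymmetry of $U$ and the rolling constraint $U\nu_a = u/r$ must be applied in sequence; everything else is routine algebra with the wedge product.
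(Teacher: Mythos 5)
Your proof is correct, and it follows the same overall skeleton as the paper's---eliminate the constraint force $\zeta$ from the split system (\ref{Newtonsplit}) and differentiate the rolling constraint (\ref{uconstraint}) using the shape operator---but the elimination step is carried out differently. The paper substitutes $\zeta = \frac{Du}{dt}+ge$ into the angular equation, collects the $\dot{U}$-terms into the linear map $\Gamma_a(U)=U-\gamma^{-2}(U\nu_a)\wedge\nu_a$ of (\ref{Gamma}), and inverts $\Gamma_a$, using that it scales elements of the form $w\wedge\nu_a$ by $(1+\gamma^2)/\gamma^2$. You instead use (\ref{zeta}) to write $\zeta=-r\gamma^2\dot{U}\nu_a$, solve the translational equation for the single vector $\dot{U}\nu_a\in T_aS$, and then reconstruct $\dot{U}=-(\dot{U}\nu_a)\wedge\nu_a$; this sidesteps the operator $\Gamma_a$ and its invertibility computation altogether, which is a legitimate and slightly more elementary route (note, though, that the paper's $\Gamma_a$ and the entries formula (\ref{Gamma entries}) are reused in the proof of Theorem \ref{equations 3 roll}, so its extra machinery is not wasted). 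Two small remarks: your computation of $\dot{u}\cdot\nu_a=\mathbb{S}_a u\cdot u$ is not actually needed, since the normal term $(\mathbb{S}_a u\cdot u)\nu_a$ is annihilated by the wedge with $\nu_a$ in the final step regardless---this is precisely the paper's shortcut of replacing $\frac{Du}{dt}$ by $\dot{u}$ under $\wedge\,\nu_a$; and in your justification of $(\zeta\wedge\nu_a)\nu_a=-\zeta$ it is $\zeta\cdot\nu_a=0$ that kills the first term of the wedge, while $\nu_a\cdot\nu_a=1$ supplies the coefficient of $-\zeta$ (the identity itself, and hence your conclusion, is correct).
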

  \begin{proof} 
  Let us define for each $a\in S$ the linear map
  \begin{equation}\label{Gamma}\Gamma_a: U\in \mathfrak{so}(n)\mapsto U-{\gamma^{-2}}\left(U\nu_a\right)\wedge\nu_a\in \mathfrak{so}(n).\end{equation}
  This is an invertible map. To check this claim,  let $\tau_1, \dots, \tau_n$ be an orthonormal basis of $\mathbb{R}^n$ with $\tau_n=\nu_a$. 
  Then a straightforward computation using the basic properties of $\wedge$ leads, for $i<j\leq n$,  to
  \begin{equation}\label{Gamma entries}
  \tau_i\cdot \Gamma_a(U)\tau_j =\left(1+\frac{\delta_{jn}}{\gamma^2}\right)\tau_i\cdot U\tau_j.
  \end{equation}
  Clearly, then, $\Gamma_a$ is injective, hence invertible. It may be helpful  noting here that
  $\tau_i\cdot U\tau_j$ is a constant multiple of the trace inner product $\left\langle U, \tau_i\wedge \tau_j\right\rangle$.
  If in particular  $U=w\wedge \nu$,  then $\Gamma(w\wedge \nu)= (1+\gamma^2)/\gamma^2 w\wedge \nu.$

  The first equation, for $\dot{a}$, comes from (\ref{uconstraint}) and the second, for $\dot{A}$, is immediate from the
  definition $U=\dot{A}A^{-1}$. Thus we only need to justify the third equation.  The second equation in  (\ref{Newtonsplit}) gives
  $\zeta= \frac{Du}{dt}+ge$, and  the first   gives 
  $$\dot{U}=\frac1{r\gamma^2}\left(\frac{Du}{dt}+ge\right)\wedge \nu.$$
  In the above we may replace the covariant derivative $\frac{Du}{dt}$ with the ordinary derivative $\dot{u}$ since the difference is a multiple of
  $\nu$, which vanishes after taking the cross product $\wedge$  with $ \nu$. 
Now notice that
  $ \dot{\nu}_a = D_u\nu = - \mathbb{S}_au$, where $\mathbb{S}_a$ is the shape operator of $S$ at $a$. (See Definition \ref{shape}.)
  Therefore,
  $$ \dot{u}=r\dot{U}\nu + rU\dot{\nu} = r\dot{U}\nu - rU\mathbb{S}u =  r\dot{U}\nu - r^2U\mathbb{S}U\nu,$$
  from which we obtain
  $$\Gamma(\dot{U})= \dot{U}-\frac1{\gamma^2}\left(\dot{U}\nu\right)\wedge\nu = -\frac1{r\gamma^2}\left(r^2U\mathbb{S}U\nu -ge\right)\wedge\nu.$$
  The desired third equation follows from the definition of  $\Gamma$. 
  \end{proof}

  Since we are particularly concerned with the issue of boundedness of trajectories, we need to obtain information about the function $h(a)=a\cdot e$, whose derivative in time is $u\cdot e$. The next proposition shows how to get a handle on this quantity.
  \begin{proposition}\label{main equation n}
Let  $e$ be the axis vector of the cylinder $S$, $\nu$  the inward pointing unit normal vector field  of  $S$, $q(t)=(A(t), a(t))$, and $\dot{q}(t)=(U(t)A(t), u(t))$. Then
  $$\frac{d}{dt}(e\cdot u)= \frac{\gamma^2}{1+\gamma^2}r^2 \left(Ue\right)\cdot \left(\mathbb{S}_a U\nu\right) -\frac{g}{1+\gamma^2}$$
  holds under the assumption that $q(t)$ satisfies the constrained Newton's equation for the rolling motion of the particle of radius $r$, mass $m$, mass distribution parameter $\gamma$, subject to a constant force $-mge$. 
  Furthermore,
   $$\left(Ue\right)\cdot \left(\mathbb{S}_a U\nu\right)=\sum^{n-2}_{i=1}\lambda_i(a)(\tau_i\cdot Ue)(\tau_i\cdot U\nu) $$
  where $\tau_i=\tau_i(a)$, $i=1,\dots, n-1$, is an orthonormal basis of $T_aS$ consisting of principal vectors, $\lambda_i(a)$ are the respective principal curvatures of $S$, and
$\tau_{n-1}=e$, $\lambda_{n-1}=0$. In dimension $3$ we have $\tau_a:=\tau_1(a)=\nu_a\times e$. Letting in this case $\lambda(a)$ be the principal curvature in direction $\tau_a$, the above equation reduces to
$$\frac{d}{dt}(e\cdot u)= \frac{\gamma^2}{1+\gamma^2}r^2 \lambda(a)\left(\tau_a\cdot Ue\right)  \left(\tau_a\cdot U\nu_a\right) -\frac{g}{1+\gamma^2}. $$
  \end{proposition}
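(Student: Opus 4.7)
The strategy is to compute $\frac{d}{dt}(e\cdot u) = e\cdot \dot{u}$ directly, by substituting the formula $\dot{u} = r\dot{U}\nu - r^{2} U\mathbb{S}_a U\nu$ (already derived inside the proof of Proposition \ref{deNewton}) and then plugging in the explicit form of $\dot{U}$ given there. The single geometric fact that makes the computation collapse is that, for a cylinder with axis vector $e$, the normal $\nu_a$ lies in $e^\perp$, so $e\cdot \nu = 0$; this identity will kill precisely the terms that would otherwise spoil the formula.

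The first step is to evaluate $e\cdot \dot{U}\nu$. Using the identity $(w\wedge \nu)\nu = (w\cdot \nu)\nu - w$, one finds
$$\dot{U}\nu = \frac{1}{(1+\gamma^{2})r}\Bigl(\bigl(r^{2} U\mathbb{S}_a U\nu - g e\bigr) - \bigl((r^{2} U\mathbb{S}_a U\nu - g e)\cdot \nu\bigr)\nu\Bigr).$$
Dotting with $e$, the $\nu$-term vanishes thanks to $e\cdot \nu=0$, and also $e\cdot e=1$, so
$$e\cdot \dot{U}\nu = \frac{1}{(1+\gamma^{2})r}\bigl(r^{2}\, e\cdot U\mathbb{S}_a U\nu - g\bigr).$$
Substituting this into $e\cdot \dot{u} = r\, e\cdot \dot{U}\nu - r^{2}\, e\cdot U\mathbb{S}_a U\nu$, the two contributions proportional to $r^{2}\, e\cdot U\mathbb{S}_a U\nu$ combine with coefficient $\frac{1}{1+\gamma^{2}} - 1 = -\frac{\gamma^{2}}{1+\gamma^{2}}$, and the antisymmetry of $U$ then converts $-e\cdot U(\mathbb{S}_a U\nu)$ into $(Ue)\cdot(\mathbb{S}_a U\nu)$, yielding the first displayed identity.

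For the principal-vector decomposition, note that since a cylinder is translation-invariant along $e$, the normal field $\nu$ is constant in the $e$-direction, so $\mathbb{S}_a e = -D_e \nu = 0$: $e$ is itself a principal vector with principal curvature $0$. I would choose $\tau_{n-1}=e$ and complete to an orthonormal basis $\tau_1,\dots,\tau_{n-1}$ of $T_aS$ by principal vectors, and use that $U\nu\in T_aS$ (because $\nu\cdot U\nu=0$ by antisymmetry) to expand $\mathbb{S}_a U\nu = \sum_{i=1}^{n-1}\lambda_i(\tau_i\cdot U\nu)\tau_i$. Dotting this with $Ue$ (whose normal component gives nothing, since $\mathbb{S}_aU\nu\in T_aS$), the $i=n-1$ term drops because $\lambda_{n-1}=0$, leaving the stated sum from $i=1$ to $n-2$. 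In dimension three only $\tau_1=\nu\times e$ remains, producing the final reduction. No step is especially delicate; the main thing to keep straight is the antisymmetry of $U$ and the timing of the $e\cdot \nu = 0$ cancellation.
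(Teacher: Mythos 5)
Your proposal is correct and follows essentially the same route as the paper: both plug the explicit $\dot{U}$ from Proposition \ref{deNewton} into $\dot{u} = r\dot{U}\nu - r^{2}U\mathbb{S}_aU\nu$, use $e\cdot\nu=0$ to kill the unwanted term, and finish with the antisymmetry of $U$; your second paragraph merely spells out the eigenbasis expansion the paper dismisses with ``the other claims follow easily from definitions.'' No gaps.
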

  \begin{proof}
  By Proposition \ref{deNewton}  
  $$r e\cdot \dot{U}\nu =\frac{1}{1+\gamma^2}\left[-r^2\left(Ue\right)\cdot\left(\mathbb{S}U\nu\right)-g\right]. $$
   Now observe that
  $$ \frac{d (e\cdot u)}{dt}  = r\frac{d(U\nu)}{dt}= re\cdot \dot{U}\nu + r e\cdot U \dot{\nu}= re\cdot \dot{U}\nu - r e\cdot U \mathbb{S} u
  = re\cdot \dot{U}\nu - r^2 e\cdot U \mathbb{S} U\nu$$
  hence $re\cdot \dot{U} \nu = \frac{d (e\cdot u)}{dt} -r^2 (Ue)\cdot (\mathbb{S} U\nu)$.  The first equation of the proposition is now an immediate consequence.
  The other claims follow easily from definitions.  \end{proof}
  
In dimension $3$  there is an orthonormal moving frame consisting of $\tau, \nu, e$ where $\tau=\nu\times e$. The field $\tau$
  is  parallel  ($\frac{D\tau}{dt}=0$). Furthermore $\mathbb{S}_a\tau_a=\lambda(a)\tau_a$ where $\lambda(a)$ is a principal curvature, and $\mathbb{S}_ae=0$. When $S$ is a circular cylinder of radius $R-r$ (and the extended billiard domain is a circular solid cylinder of radius $R$), $\lambda(a)=1/(R-r)$.
  
  \begin{theorem} \label{equations 3 roll} Let $n=3$ and introduce the {\em angular velocity vector} $\omega$ according to the definition
$w\mapsto U=\omega\times u.$ 
Under the conditions and notations of Proposition \ref{main equation n},  the following system of equations hold: 
  $$ \frac{d}{dt}\left(u\cdot e\right) +\frac{\gamma^2}{1+\gamma^2} r^2 \lambda(a) (\omega\cdot e)(\omega\cdot \nu) +\frac{g}{1+\gamma^2}=0$$
and 
  $$ \frac{d}{dt}\left( \omega\cdot \nu\right) = \lambda(a)(\omega\cdot e)(u\cdot e),$$
  where $\omega\cdot e$ is constant in $t$.  Notice that $u\cdot e=\dot{h}$, where  $h(a)=a\cdot e$  is the height  of the center of mass of the moving particle.  
  In terms of $h$,  the above becomes
  $$ \ddot{h}+\frac{\gamma^2}{1+\gamma^2} r^2 \lambda(a)\omega_e(0)\left[\omega_\nu(0)+\omega_e(0)\int_0^t\lambda(a(s))\dot{h}(s)\, ds\right]+\frac{g}{1+\gamma^2}=0,$$
  where $\omega_e=\omega\cdot e$ and  $\omega_\nu=\omega\cdot \nu$.
  \end{theorem}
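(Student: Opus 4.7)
The plan is to deduce everything from Proposition \ref{main equation n} and Proposition \ref{deNewton} by translating the matrix quantities into the vector $\omega$ using the 3D identification $Uv=\omega\times v$, and then exploiting the positively oriented orthonormal frame $\tau,\nu,e$ (so that $\tau\times\nu=e$, $\nu\times e=\tau$, $e\times\tau=\nu$). Writing $\omega=\omega_\tau\tau+\omega_\nu\nu+\omega_e e$, the immediate computations
$$U\nu=\omega\times\nu=\omega_\tau e-\omega_e\tau,\qquad Ue=\omega\times e=\omega_\tau\nu-\omega_\nu\tau$$
give $\tau\cdot Ue=-\omega_\nu$ and $\tau\cdot U\nu=-\omega_e$; inserting these into the three-dimensional formula of Proposition \ref{main equation n} produces
$$\frac{d}{dt}(u\cdot e)=\frac{\gamma^2}{1+\gamma^2}r^2\lambda(a)(-\omega_\nu)(-\omega_e)\cdot(-1)\cdot\text{sign checks}-\frac{g}{1+\gamma^2},$$
i.e.\ the first displayed equation of the theorem. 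The only subtlety here is a careful accounting of signs in the triple-product expansions; the rest is substitution.

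For the second equation I would differentiate $\omega\cdot\nu$ directly. From Proposition \ref{deNewton}, $\dot U=\dot\omega\times$ where $\dot\omega=-\frac{1}{(1+\gamma^2)r}\left(r^2U\mathbb{S}_aU\nu-ge\right)\times\nu$. Using $\mathbb{S}_ae=0$ and $\mathbb{S}_a\tau=\lambda\tau$, the computation
$$\mathbb{S}_aU\nu=\mathbb{S}_a(\omega_\tau e-\omega_e\tau)=-\omega_e\lambda\tau,\qquad U\mathbb{S}_aU\nu=-\omega_e\lambda(\omega\times\tau)=-\omega_e\lambda(\omega_e\nu-\omega_\nu e)$$
collapses $\dot\omega$ to a scalar multiple of $\tau$ (both the curvature term and the gravity term $e\times\nu=-\tau$ lie along $\tau$). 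Therefore $\dot\omega\cdot\nu=0$ and $\dot\omega\cdot e=0$, which gives simultaneously the constancy of $\omega\cdot e$ and the identity
$$\frac{d}{dt}(\omega\cdot\nu)=\omega\cdot\dot\nu=-\omega\cdot\mathbb{S}_au=-\lambda(a)(u\cdot\tau)(\omega\cdot\tau).$$
The no-slip constraint $u=rU\nu$ then yields $u\cdot\tau=-r(\omega\cdot e)$ and $u\cdot e=r(\omega\cdot\tau)$, so the right side becomes $\lambda(a)(\omega\cdot e)(u\cdot e)$, which is the second equation.

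For the final integral form, I would integrate the second equation, using the constancy of $\omega_e$, to obtain
$$\omega_\nu(t)=\omega_\nu(0)+\omega_e(0)\int_0^t\lambda(a(s))\dot h(s)\,ds,$$
and then substitute into the first equation (noting $\dot h=u\cdot e$, $\ddot h=\frac{d}{dt}(u\cdot e)$) to reach the closed integro-differential equation for $h$.

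The main obstacle is purely bookkeeping: assembling the correct signs when expanding several nested cross products relative to the frame $\tau,\nu,e$, and keeping track of which quantities are tangent to $S$ and which are not (so that the ordinary and covariant derivatives of $\nu$ and $u$ may be used interchangeably when the difference lies along $\nu$). Once those expansions are done carefully, each claim is a short algebraic consequence of Propositions \ref{main equation n} and \ref{deNewton}.
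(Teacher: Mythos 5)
Your proposal is correct in substance and follows essentially the same route as the paper: everything is derived from Propositions \ref{deNewton} and \ref{main equation n} by expanding in the frame $\tau,\nu,e$ and using the constraint $u=rU\nu$; the only (cosmetic) difference is that you package the argument in terms of $\omega$ directly, showing $\dot\omega$ is parallel to $\tau$ (which gives both the constancy of $\omega\cdot e$ and $\frac{d}{dt}(\omega\cdot\nu)=\omega\cdot\dot\nu$ at once), whereas the paper works with the matrix entries $\tau\cdot U\nu$ and $\tau\cdot Ue$, proving constancy of $\tau\cdot U\nu$ via the map $\Gamma$ and differentiating $\tau\cdot Ue$ using $\dot\tau$ and $\tau\cdot\dot Ue=0$; these are equivalent computations. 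One concrete slip to fix: with the positively oriented frame one has $\omega\times e=\omega_\nu\tau-\omega_\tau\nu$, not $\omega_\tau\nu-\omega_\nu\tau$, so $\tau\cdot Ue=+\omega_\nu$ (matching the paper's identification $\tau\cdot Ue=\omega\cdot\nu$, alongside $\tau\cdot U\nu=-\omega_e$); then $(\tau\cdot Ue)(\tau\cdot U\nu)=-\omega_\nu\omega_e$ and the first displayed equation of the theorem drops out of Proposition \ref{main equation n} directly, with no mysterious extra factor of $-1$ or unspecified ``sign checks'' needed. With that correction, and noting (as you do) that $\dot\nu=-\mathbb{S}_a u$ and $u\cdot\tau=-r\omega_e$, $u\cdot e=r\omega_\tau$, the rest of your argument, including the integration yielding the closed equation for $h$, is sound.
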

  \begin{proof}
 Observe that $$\nu\cdot \mathbb{S} U\nu= 0, \  e\cdot \mathbb{S}U\nu=(\mathbb{S} e)\cdot (U\nu)=0,  \ \tau\cdot \mathbb{S}U\nu=(\mathbb{S}\tau)\cdot (U\nu)=\lambda \tau\cdot   U\nu,$$ so that
 $$(Ue)\cdot(\mathbb{S}_a U\nu)= (\tau\cdot Ue)(\tau\cdot\mathbb{S}U\nu)+ (\nu\cdot Ue)(\nu\cdot \mathbb{S}U\nu)+(e\cdot Ue)(e\cdot \mathbb{S}U\nu)=
 \lambda (\tau\cdot Ue)(\tau\cdot U\nu).$$
 The quantity $\tau\cdot U\nu$ is constant in $t$. To verify this claim, first observe that, as $\dot{\tau}$ is collinear  to $\nu$ and $\dot{\nu}$ is collinear to $\tau$,
 $$\frac{d}{dt}\left(\tau\cdot  U\nu\right)= \dot{\tau}\cdot U\nu+\tau\cdot \dot{U}\nu+\tau\cdot U \dot{\nu} =\tau\cdot \dot{U}\nu  $$
From the third equation in Proposition \ref{deNewton} and Equation (\ref{Gamma entries})
we obtain
$$\left(1+\frac1{\gamma^2}\right)\tau\cdot \dot{U}\nu= \tau\cdot \Gamma(\dot{U})\nu=-\frac1{r\gamma^2}\tau\cdot\left[\left(r^2U\mathbb{S}U\nu-ge\right)
\wedge\nu\right]\nu=\frac{r}{\gamma^2}\tau\cdot U\mathbb{S}U\nu.$$
But $\tau\cdot U\mathbb{S}U\nu=-(U\tau)\cdot (\mathbb{S} U\nu)=-\lambda(\tau\cdot U\tau)(\tau\cdot U\nu)=0.$ Therefore $\tau\cdot U\nu$ is indeed constant.
It remains to understand the term $\tau\cdot Ue$ in
 $(Ue)\cdot(\mathbb{S}_a U\nu)=
 \lambda (\tau\cdot Ue)(\tau\cdot U\nu).$
 Observe that $\tau\cdot \dot{U} e=\tau\cdot \left((r\gamma^2)^{-1}\zeta\wedge\nu\right)e=0$ and $\dot{\tau}=\lambda u\cdot \tau \nu$, hence
 $$\frac{d}{dt}\left(\tau\cdot U e\right)= \dot{\tau}\cdot U e + \tau\cdot \dot{U}e=\lambda (\tau\cdot u)( \nu\cdot Ue)=-\lambda r (\tau\cdot U\nu)(e\cdot U\nu)=
 -\lambda(\tau\cdot U \nu)(e\cdot u),$$
where the third  and fourth equalities made use of Equation (\ref{uconstraint}). 
Finally, notice that  $\tau\cdot U\nu=-\omega\cdot e$ and $\tau \cdot U e=\omega\cdot \nu$. 
We conclude the proof by applying these observations to the last equation in the statement of   Proposition \ref{main equation n}.
 \end{proof}

As a simple example, we see that 
the height  
 of the rolling particle in a $3$-dimensional vertical circular cylinder undergoes simple harmonic oscillations, so  long as
 the constant of motion $\omega_e$ is non-zero. In particular, the motion  is bounded.
In fact,
  suppose the cross-section of  $S$ is a circle  of radius $R-r$. In this case $\lambda=1/(R-r)$ is constant, so
  $$\ddot{h} +\frac{\gamma^2}{1+\gamma^2}\left(\frac{r}{R-r}\right)^2 \omega_e(0)^2 h + \frac{\gamma^2}{1+\gamma^2}\frac{r^2}{R-r}\omega_e(0)
  \left(\omega_\nu(0)-\frac1{R-r}\omega_e(0) h(0)\right)+\frac{g}{1+\gamma^2}=0.$$
  This has  the form $\ddot{h}+c_0h +c_1=0$ where $c_0$ is a positive constant (assuming $\omega_e(0)\neq 0$).  In terms of the variable $z:=h+c_1/c_0$,
  the equation takes the form $\ddot{z}+c_0z=0$, whose solutions are the bounded functions $z(t)=C_1\cos(\sqrt{c_0}t)+C_2\sin(\sqrt{c_0}t).$

  The following interesting observation was made in \cite{gualtieri}. Let  $T_h$ and $T_v$ denote, respectively, the periods of horizontal and vertical oscillation of the rolling ball in the circular vertical cylinder. One easily finds that
  $T_h = 2\pi (R-r)/r\omega_e$ and $T_v=\sqrt{\frac{1+\gamma^2}{\gamma^2}}2\pi (R-r)/r\omega_e$. Therefore the ratio  of these two periods only depends on the mass distribution parameter $\gamma$: $T_v/T_h= \sqrt{\frac{1+\gamma^2}{\gamma^2}}$. For example,
   $\gamma^2=2/5$ for the uniform distribution in dimension $3$, so the period ratio in this case is $\sqrt{7/2}$.

We now restate and prove Proposition \ref{bounded rolling}. 
\newtheorem*{prop:bounded rolling}{Proposition \ref{bounded rolling}}
\begin{prop:bounded rolling}
Suppose that  the cross-section of the $3$-dimensional vertical cylinder is a differentiable simple closed curve and that the constant of motion $\omega_e$\----the vertical component of the angular velocity vector\----is non-zero.
Then trajectories of the rolling motion  under a constant force parallel to the axis of the cylinder are bounded.
\end{prop:bounded rolling}
\begin{proof}
Let $h=a\cdot e$ denote, as before, the height of the center of mass of the rolling particle, and introduce $\sigma=\dot{h}$ and $w$ the $\nu$ component of the angular velocity vector  $\omega$. According to Theorem \ref{equations 3 roll}, the function $h(t)$ can be obtained by solving an initial value problem
for the system
$$\dot{h} = \sigma, \ \ \dot{\sigma}= -c_1\lambda(t) w + c_3, \ \ \dot{w}= c_2\lambda(t)\sigma, $$
where $c_1, c_2, c_3$ are constants involving the parameters $\gamma$, $\omega_e$, $r$ and $g$, and $c_1, c_2$ are positive. The principal curvature
 $\lambda(t)=\lambda(a(t))$ is a periodic function of $t$ which is  known in advance since it only depends on the point of contact  at time $t$ along the cross-sectional boundary curve, and we know which point that is from the initial condition and the constant value of $\omega_e$. (That boundary point moves at a constant rate $r\omega_e$.) A simple rescaling of the variables gives the system
 $$\dot{x}_1= x_2, \ \ \dot{x}_2= -\eta(t) x_3 + 1, \ \ \dot{x}_3=\eta(t) x_2$$
 where $x_1$ is a constant multiple of $h$ and $\eta(t)$ is a periodic function of $t$ whose period we may assume without loss of generality to be $1$.
Introducing the complex variable $z=x_2+ix_3$, we previous system reduces to
$\dot{x}_1= \text{Re}(z), \ \ \dot{z}= i\eta z+1.$  For simplicity, let us assume $0$ initial conditions.  Then the differential equation for $z$ has solution
$$z(t) = e^{i f(t)}\int_0^t e^{-i f(s)}\, ds,$$
where $f(t)=\int_0^t\eta(s)\, ds$ satisfies $f(t+1)=f(t) + 1$. Standard integral manipulations give
$$ z(n+t)=\frac{e^{in}-1}{1-e^{-i}} e^{i f(t)} \int_0^1 e^{-if(s)}\, ds + e^{i f(t)}\int_0^t e^{-i f(s)}\, ds$$
for all integers $n$ and $0\leq t<1$.  The goal is to establish that   the real part of $\int_0^t z(s)\, ds$, which equals $x_1(t)$,  is a bounded function. 
Let us verify this fact for $t=n$, an integer. Another straightforward manipulation of integrals leads to
\begin{equation}\label{braces}\int_0^n z(s)\, ds = \text{(bounded term)} -\frac{n}{\left| 1- e^{-i} \right|^2}\left\{(1-e^i)\mathcal{I}_1- 2(1-\cos 1)\mathcal{I}_2  \right\}. \end{equation}
where $\mathcal{I}_1=\int_0^1\int_0^1 e^{i[f(t)-f(s)]}\, ds\, dt $ and $\mathcal{I}_2=\int_0^1\int_0^t e^{i[f(t)-f(s)]}\, ds\, dt$.
But $$\mathcal{I}_1=2\int_0^1\int_0^t \cos(f(t)-f(s))\, ds\, dt. $$ One then notices that the real part of the term in braces in equation (\ref{braces})
must be zero.
\end{proof}

  \section{No-slip billiards in  general cylinders}\label{sec:No-slip billiards in  general cylinders}
  We now prove Theorem \ref{first theorem}, reproduced below.  
\newtheorem*{thm:first theorem}{Theorem \ref{first theorem}}
\begin{thm:first theorem}
Let  $\mathcal{N}$ be the reduced phase space of the no-slip billiard system on the solid cylinder domain $\mathcal{B}\subset \mathbb{R}^n$, and let
$\overline{\mathcal{N}}$ be the reduced phase space for the associated  transverse billiard system. Then trajectories of the no-slip billiard on $\mathcal{N}$, possibly with a constant force in the longitudinal direction,  project to trajectories of the no-slip billiard map on $\overline{\mathcal{N}}$, where the latter system is given the same mass distribution parameter $\gamma$ as the billiard in dimension $n$. 
 \end{thm:first theorem}
\begin{proof}
  Given a vector space $W$, it makes sense to write the Lie algebra of the special Euclidean group on $W$,  as a vector space, in the form
  $\mathfrak{se}(W)=(W\wedge W) \oplus W$
  where $W$ corresponds to infinitesimal translations and $W\wedge W$ is the space spanned by elements $u\wedge v$, for all $u, v\in W$. In this notation we have, for $W=\mathbb{R}^{n-1} = e^\perp$, 
  $$\mathfrak{se}(n)= \mathfrak{se}(n-1)\oplus \left(\mathbb{R}^{n-1}\wedge e\right) \oplus \mathbb{R} e$$
  and this direct sum decomposition is orthogonal with respect to 
 the inner product  on $\mathfrak{se}(n)$ given above in Equation \ref{metric}.
 Also observe that the map $C_a$ (Proposition \ref{no-slip map}), at each $a$ on the boundary of $\mathcal{B}$, respects the decomposition 
 $$\mathfrak{se}(n)=\mathfrak{se}(n-1)\oplus  \mathfrak{se}(n-1)^\perp$$
since   for  all $w\in W$ for which $w\cdot \nu_a=0$ we have 
 $ C_a(0, w\wedge e)= (0,w\wedge e)$ and 
\begin{equation}\label{coll map}
 C_a\left(0, {\nu_a\wedge e}\right)=\left({r\gamma}s_\beta  e, -c_\beta {\nu_a\wedge e}\right), \ \ 
 C_a(e,0) = \left(c_\beta e, ({r\gamma})^{-1}s_\beta {\nu_a\wedge e}\right).
\end{equation}
 Here we are writing $C_a(u,U)$, for $u\in \mathbb{R}^n$ and $U\in \mathfrak{so}(n)$, on the reduced phase space $\mathcal{N}$.
 Letting $C_{\bar{a}}$ be the no-slip  reflection map at $\bar{a}$ of the system on $\overline{\mathcal{B}}$,  and writing $\Pi$ for the orthogonal projection from $\mathfrak{se}(n)$ to $\mathfrak{se}(n-1)$, it follows that 
 $$\Pi\circ C_a= C_{\bar{a}}\circ \Pi. $$
 (As already noted, we use the symbol  $\Pi$   to denote the orthogonal projection on various  subspaces of $\mathbb{R}^n$; the context will make it clear which  subspace  one is referring to at any given moment.)
From these observations we conclude that the natural projection from $\mathcal{N}$ to the reduced phase space $\overline{\mathcal{N}}$ of the transverse billiard system  commutes with the respective no-slip billiard maps.
\end{proof}

 The following notation will be used in
our  study of the longitudinal motion   of no-slip billiards in general cylinders with axis vector $e$. 
  Let   $(a_j, u^-_j, U^-_j), (a_j, u_j, U_j)\in \mathcal{N}$ denote, respectively, the pre- and post-collision states  at the $j$th collision, $j=0, 1, \dots$,
  for a given trajectory of the no-slip system in  $\mathcal{B}$; 
   we  write  $\nu_j=\nu(a_j)$ for the inward pointing normal vector to the boundary of $\mathcal{B}$ at $a_j$; the time interval  between consecutive collisions, from the $j$th to the $j+1$st collision, will be denoted $t_j$; we further introduce the velocity components $\sigma_j:= u_j\cdot e$ and $w_j:= \gamma  rU_je\in e^\perp=:W$, and the longitudinal projection $h_j:=a_j\cdot e$.  
  Define the following elements of $\mathbb{R}^n=\mathbb{R}e\oplus W$:
  $$\Lambda_i=\left(\begin{array}{c}\sigma_i \\w_i\end{array}\right), \ \ \Lambda_i^-=\left(\begin{array}{c}\sigma^-_i \\w^-_i\end{array}\right), \ \ \mathbbm{1}=\left(\begin{array}{c}1 \\0\end{array}\right),\ \  \Phi=-g\mathbbm{1}.$$
  The
 special notation $\mathbbm{1}$ is used here  for the first standard basis vector of $\mathbb{R}^n$ in order to emphasize that we are dealing with a  velocity space mixing linear and angular components, and not the ambient $\mathbb{R}^n$  of the billiard domain. 
Set $W_a:=\{w\in W: w\cdot \nu_a=0\}$,  $a\in \partial\mathcal{B}$,  let $\Pi_a$ be the orthogonal projection to $W_a$, and define
 \begingroup
\renewcommand*{\arraystretch}{1.5}
$$\mathcal{A}(a):=\left(\begin{array}{cc}c_\beta & -s_\beta \nu_a^\dagger \\-s_\beta \nu_a & -c_\beta \nu_a \nu_a^\dagger + \Pi_a\end{array}\right) $$
\endgroup 
  Simple algebraic manipulation using  the basic properties of $\wedge$ and Proposition \ref{no-slip map} gives that  $\mathcal{A}(a)$ maps pre- to post-collision velocity components in the mixed velocity space $\mathbb{R}^n$. Thus 
  $\Lambda_i=\mathcal{A}_i\Lambda_i^-$, where $\mathcal{A}_i:=\mathcal{A}(a_i)$. Over the intercollision flight,  the change in these $n$ mixed velocity components
is: $\Lambda^-_i=\Lambda_{i-1}+t_{i-1}\Phi$ since $\sigma_i^-=\sigma_{i-1}- t_{i-1}g$ and $w_i^-=w_{i-1}$ (recall that $U$ does not change between collisions). Therefore,

  \begin{proposition}\label{longitudinal equations}
 With the notation just introduced, the sequence of displacements $h_i$ along the cylinder's axis 
 satisfies 
 {\em \begin{align*}h_i&=h_{i-1}+\mathbbm{1}^\dagger\left(t_{i-1}\Lambda_{i-1}+\frac{t^2_{i-1}}{2}\Phi\right) \\
 \Lambda_i&=\mathcal{A}_i\left(\Lambda_{i-1}+t_{i-1}\Phi\right).
 \end{align*}}
with  initial conditions $\Lambda_0$ and $h_0$. 
  \end{proposition}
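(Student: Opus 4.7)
The proposition is essentially a bookkeeping statement that assembles the collision update rule and the free-flight evolution into a single recursion for the variables $(h_i,\Lambda_i)$, and the bulk of the work has already been laid out in the paragraphs immediately preceding the statement. My plan is therefore to verify each ingredient and then combine them.

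First I would verify the collision relation $\Lambda_i=\mathcal{A}_i\Lambda_i^-$. The key point is that the $n$-dimensional subspace of $\mathfrak{se}(n)$ spanned by $e$ (in the translational part) and by $\nu_a\wedge e$ together with $\{w\wedge e : w\in W_a\}$ (in the rotational part) is invariant under $C_a$, and on this subspace $C_a$ acts by the matrix $\mathcal{A}(a)$ written in the basis $(\mathbbm{1},\text{basis of }W)$. This can be read off directly from the three formulas in equation (\ref{coll map}): the first gives the coefficient $-c_\beta$ on the diagonal block restricted to $\nu_a$ and the off-diagonal $-s_\beta\nu_a$; the second gives the $c_\beta$ on the $e$-$e$ entry and the $-s_\beta\nu_a^\dagger$ row; and the fact that $w\wedge e$ is fixed when $w\perp\nu_a$ gives the $\Pi_a$ block. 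Collecting these entries reproduces $\mathcal{A}(a)$.

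Next I would verify the free-flight evolution $\Lambda_i^-=\Lambda_{i-1}+t_{i-1}\Phi$. Between collisions the particle undergoes geodesic motion in $SE(n)$ subject to the constant force $-mge$, which has no $\mathfrak{so}(n)$-component, so Proposition \ref{derivatives} gives $\dot U=0$ and $\dot u=-ge$. Hence $U$ and therefore $w=\gamma rUe$ are unchanged during the flight, while $\sigma=u\cdot e$ evolves by $\sigma_i^-=\sigma_{i-1}-gt_{i-1}$. In the mixed-velocity notation these two statements are exactly $\Lambda_i^-=\Lambda_{i-1}+t_{i-1}\Phi$. Composing with the collision step yields the second displayed equation.

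Finally, for the height update, the center of mass moves in a straight line between collisions with initial vertical velocity $\sigma_{i-1}=\mathbbm{1}^\dagger\Lambda_{i-1}$ and vertical acceleration $-g=\mathbbm{1}^\dagger\Phi$, so $h_i-h_{i-1}=\sigma_{i-1}t_{i-1}-\tfrac12 g\,t_{i-1}^2=\mathbbm{1}^\dagger\bigl(t_{i-1}\Lambda_{i-1}+\tfrac{t_{i-1}^2}{2}\Phi\bigr)$, which is the first displayed equation. I do not anticipate a genuine obstacle here: the one place that needs a little care is confirming that the basis chosen to write $\mathcal{A}(a)$ really does diagonalize the action of $C_a$ on the $\mathrm{span}(e,\nu_a\wedge e)\oplus\{w\wedge e:w\in W_a\}$ subspace as in (\ref{coll map}), but this is a direct substitution into the formula of Proposition \ref{no-slip map}.
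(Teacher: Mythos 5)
Your proposal is correct and follows essentially the same route as the paper: the paper likewise obtains $\Lambda_i=\mathcal{A}_i\Lambda_i^-$ by direct substitution into the no-slip collision map of Proposition \ref{no-slip map} (equivalently, the restricted formulas (\ref{coll map})), combines it with $\sigma_i^-=\sigma_{i-1}-t_{i-1}g$, $w_i^-=w_{i-1}$ for the free flight, and gets the height update from constant-acceleration kinematics. The only nitpick is that you attribute the two off-diagonal pieces of $\mathcal{A}(a)$ to the wrong formulas in (\ref{coll map}) (each formula determines a column, not a row), but since $\mathcal{A}(a)$ is symmetric this is harmless.
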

 \begin{proof}
 This is a simple  consequence of the general form of $C_a$ given in  Proposition \ref{no-slip map}, the above definitions, and the elementary properties of $\wedge$. 
 \end{proof}

 Observe that $\mathcal{A}(a)$, like $C_a$, is an orthogonal involution. It has eigenvalues $-1$ with multiplicity $1$ and $1$ with multiplicity $n-1$. In fact we have
for all $w\in W_a$
 $$\mathcal{A}(a) \left(\begin{array}{c}0 \\ w\end{array}\right)= \left(\begin{array}{c}0 \\ w\end{array}\right),\ \  \mathcal{A}(a)  \left(\begin{array}{c}-1 \\  \gamma \nu_a\end{array}\right)= \left(\begin{array}{c}-1 \\  \gamma \nu_a\end{array}\right), \ \ 
 \mathcal{A}(a)  \left(\begin{array}{c}\gamma \\    \nu_a\end{array}\right)= - \left(\begin{array}{c}\gamma \\    \nu_a\end{array}\right).$$

 We turn our attention now to the longitudinal motion when the no-slip billiard orbit is transversely periodic of period $2$. We wish to find an expression for  the longitudinal drift in the absence of forces. This is provided by the following theorem.

  \begin{theorem}\label{drift} For  an orbit with transversal period $2$,   define $Q=\mathcal{A}_1\mathcal{A}_2$, $\mathcal{A}=\mathcal{A}_1$,
  and the row vector $\xi = \left(1+c_\beta, -s_\beta \nu_1^\dagger\right)$.   Then $Q\in SO(n)$. Denote by $P$ the orthogonal projection  onto the eigenspace of $Q$ for the eigenvalue $1$. 
  Then
  $$h_{\ell}=\hat{h}_{\ell}+ \left\lfloor \frac{\ell}{2}\right\rfloor \xi P\Lambda_0$$
where the  $\hat{h}_{\ell}$ are  bounded terms of an oscillatory character that can be obtained explicitly if desired. Consequently,
  $$ \lim_{\ell\rightarrow \infty}\frac{h_\ell}\ell =\frac12\xi P\Lambda_0$$
 In particular, if $1$ is not in the spectrum of $Q$ (which may be the case in even dimensions), the system has bounded orbits.
 On the other hand, if $\xi$ is not orthogonal to the eigenspace for $Q$ associated to eigenvalue $1$, then generically in the 
 initial conditions orbits are not bounded. In the above formulas, the constant intercollision time has been set to  $1$.
  \end{theorem}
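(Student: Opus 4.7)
The plan is to iterate the recursion of Proposition~\ref{longitudinal equations} with $\Phi = 0$ and $t_i \equiv 1$, use the transverse period $2$ assumption to produce a single orthogonal cycle operator, and then apply the spectral theorem for orthogonal maps to separate a linearly growing drift from a uniformly bounded oscillatory remainder.

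Under the stated assumptions the recursion reduces to $\Lambda_i = \mathcal{A}_i \Lambda_{i-1}$ and $h_i - h_{i-1} = \mathbbm{1}^\dagger \Lambda_{i-1}$. Transverse period $2$ forces $\mathcal{A}_i$ to alternate between the two values $\mathcal{A}_1$ and $\mathcal{A}_2$, so iterating two collisions yields the cycle operator $Q = \mathcal{A}_1 \mathcal{A}_2$, under which the even-indexed velocity states $\Lambda_{2k}$ are propagated by $Q^k$ from the initial state. The eigenvector formulas for $\mathcal{A}(a)$ given after Proposition~\ref{longitudinal equations} show that each $\mathcal{A}_i$ has eigenvalue $+1$ with multiplicity $n-1$ and eigenvalue $-1$ with multiplicity one, so $\det \mathcal{A}_i = -1$ and $\det Q = +1$; in particular $Q \in SO(n)$. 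Because $Q$ is orthogonal and hence normal, $\mathbb{R}^n = \ker(I - Q) \oplus \ker(I - Q)^\perp$ is an orthogonal $Q$-invariant decomposition on whose second summand $Q$ has no eigenvalue $1$, so $I - Q$ is invertible there and the geometric-series identity
\begin{equation*}
\sum_{k=0}^{N-1} Q^k = N\, P + (I - Q)^{-1}(I - Q^N)(I - P)
\end{equation*}
holds with $P$ the orthogonal projection onto $\ker(I - Q)$; the second summand is uniformly bounded in $N$ since $\|Q^N\| = 1$.

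I would then sum the height increments by pairing consecutive collisions. The algebraic identity $\Lambda_{2k} + \Lambda_{2k+1} = (I + \mathcal{A}) Q^k \Lambda_0$ combines with the direct computation $\mathbbm{1}^\dagger (I + \mathcal{A}) = (1 + c_\beta, -s_\beta \nu_1^\dagger) = \xi$, read off from the explicit form of $\mathcal{A}(a)$, to give for $\ell = 2N$
\begin{equation*}
h_\ell - h_0 = N\, \xi P \Lambda_0 + \xi (I - Q)^{-1}(I - Q^N)(I - P)\Lambda_0,
\end{equation*}
and the odd-$\ell$ case contributes one extra increment $\mathbbm{1}^\dagger \Lambda_{\ell-1}$, bounded because $\|\Lambda_i\| = \|\Lambda_0\|$ by orthogonality of the $\mathcal{A}_i$. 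Absorbing the bounded pieces into $\hat{h}_\ell$ produces the claimed formula $h_\ell = \hat{h}_\ell + \lfloor \ell/2 \rfloor\, \xi P \Lambda_0$, and dividing by $\ell$ gives the stated drift limit. The concluding remarks follow at once: if $1 \notin \mathrm{spec}(Q)$ then $P = 0$ and $h_\ell$ is bounded, whereas if $\xi$ is not orthogonal to $\ker(I - Q)$ then $\Lambda_0 \mapsto \xi P \Lambda_0$ is a nonzero linear functional whose kernel is a proper subspace, so the drift is nonzero on a generic set of initial velocities.

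The main obstacle is indexing bookkeeping rather than substantive mathematics: one must keep straight which collision initiates the cycle so that the cycle operator in the paired sum is the theorem's $Q = \mathcal{A}_1 \mathcal{A}_2$ and not its conjugate $R = \mathcal{A}_2 \mathcal{A}_1 = \mathcal{A}_1 Q \mathcal{A}_1$. The two are orthogonally conjugate, so their $1$-eigenspaces have the same dimension and the drift statement is intrinsic, but one must carefully match the row vector $\xi = \mathbbm{1}^\dagger(I + \mathcal{A}_1)$ to the pairing $(\Lambda_{2k}, \Lambda_{2k+1})$ it actually arises from. Once the convention is pinned down, the rest is the spectral calculus above.
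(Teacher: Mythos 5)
Your argument is correct and follows essentially the same route as the paper: iterate the force-free recursion $\Lambda_i=\mathcal{A}_i\Lambda_{i-1}$, pair consecutive collisions so the sum becomes $\mathbbm{1}^\dagger(I+\mathcal{A})\sum_k Q^k\Lambda_0$ with $\xi=\mathbbm{1}^\dagger(I+\mathcal{A})$, and split the orthogonal cycle operator into its $1$-eigenspace (giving the linear drift $\lfloor \ell/2\rfloor\,\xi P\Lambda_0$) and its complement, where the partial sums are bounded\----your resolvent identity $(I-Q)^{-1}(I-Q^N)$ is just a compact form of the paper's block-rotation argument. The ordering subtlety you flag ($\mathcal{A}_1\mathcal{A}_2$ versus $\mathcal{A}_2\mathcal{A}_1$) is real\----the paper's own proof uses $Q=\mathcal{A}_2\mathcal{A}_1$ despite the statement\----and your handling of it is appropriate.
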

 \begin{proof}
For transversal period $2$ orbits, one has only to consider 
 two values for $\nu_j$, and consequently only two values for $\mathcal{A}(a)$ and a single value  $t_j=t$. 
  From $a_\ell=a_{\ell-1}+ tu_{\ell-1}$ we obtain $h_\ell= h_{\ell-1}+t\sigma_{\ell-1}$. 
   Setting $h_0=0$ and $t_0=1$ without loss of generality, we have 
 $$h_\ell=   \sum_{j=0}^{\ell-1}\sigma_j=  \mathbbm{1}^\dagger \sum_{j=0}^{\ell-1} \Lambda_j= 
 \mathbbm{1}^\dagger \sum_{j=0}^{\ell-1} \mathcal{A}_j\cdots \mathcal{A}_0\Lambda_0. $$
 Here we are setting by convention $\mathcal{A}_0$ to be the identity transformation.  For concreteness, let us assume that $\ell$ is odd:
 $\ell=2m+1$. Then, letting $\mathcal{A}:=\mathcal{A}_1$ and $Q=\mathcal{A}_2\mathcal{A}_1$ gives
 $$h_{2m+1}=  
 \mathbbm{1}^\dagger \left\{\sum_{j=0}^{m} Q^j + \mathcal{A} \sum_{j=0}^{m-1} Q^j\right\}\Lambda_0=
 \mathbbm{1}^\dagger Q \Lambda_0+ \mathbbm{1}^\dagger(I+\mathcal{A}) \sum_{j=0}^{m-1} Q^j\Lambda_0. $$
Notice that 
 $$ \sum_{j=0}^{m-1} Q^j\Lambda_0= m P\Lambda_0+ \sum_{j=0}^{m-1} Q^jP^\perp\Lambda_0.$$
 The summation on the right-hand side of the above equation must be bounded. In fact, further decomposing $P^\perp$ into $2\times 2$ or $1\times 1$ blocks (the latter associated to eigenvalue $-1$ if it is present in the spectrum of $Q$), we end up with sums of a sequence
 of vectors generated by iterating a non-trivial rotation in dimension $2$ or $1$.
 In particular, it follows that if $1$ is not an eigenvalue of $Q$ (in even dimension)  then
  trajectories having transversal period $2$ are necessarily bounded. To conclude, we note that
  $\xi = \mathbbm{1}^\dagger (I +\mathcal{A})$.
 \end{proof}

   \begin{wrapfigure}{L}{0.25\textwidth}
\centering
\includegraphics[width=1.2in]{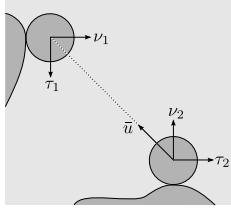}\ \ 
\caption{\label{period2_corner}{\small  Notation  for the  proof of Corollary \ref{drift example}.}}
\end{wrapfigure}

From the above theorem  we can now derive the claim made earlier  that unbounded orbits actually exist, say in dimension $3$, and
obtain the  explicit formula for the longitudinal drift shown 
 in Corollary  \ref{drift example}. This requires that we obtain  the explicit form of the rotation matrix $Q$  and find its
 spectral decomposition. This is an entirely straightforward but somewhat tedious computation, whose details we omit.

Let us introduce the angle
 $\alpha=2\phi$ (see Figure \ref{period2}) and write $c_\alpha=\cos \alpha, s_\alpha=\sin\alpha$.
 Recall that $c$ and $s$ are reserved for the cosine and sine of the special angle $\beta$ determined by the mass distribution parameter $\gamma$. 
Also consider the normal and tangent vectors $\nu_i$ and $\tau_i$ at the two contact points, as indicated in Figure \ref{period2_corner}. Then $Q$ assumes the following form
  $$Q=
    \left(\begin{array}{cc}c^2-s^2 c_\alpha & \left[-sc(1+c_\alpha)\nu_2-ss_\alpha\tau_2\right]^\dagger \\[0.1cm]
 -sc(1+c_\alpha)\nu_1 + ss_\alpha \tau_1& (s^2-c^2c_\alpha)\nu_1\nu_2^\dagger +cs_\alpha( \tau_1\nu_2^\dagger -\nu_1\tau_2^\dagger )-c_\alpha\tau_1\tau_2^\dagger \end{array}\right).$$
  Now observe that 
  $$\eta=\frac1{ \sqrt{s_\alpha^2+2\gamma^2 (1+c_\alpha)} } \left(\begin{array}{c}s_\alpha \\\gamma(\tau_1-\tau_2)\end{array}\right)$$
  is a unit length eigenvector for the eigenvalue $1$ of $Q$. Then an application of the limit formula for the vertical drift from Theorem \ref{drift} gives
  the formula of Corollary \ref{drift example}. Notice that  $P$
  in Theorem \ref{drift} is in this case the rank-$1$ projection on the subspace spanned  by $\eta$.

    \section{Forced billiard motion in a circular cylinder}\label{sec:Forced billiard motion in a circular cylinder}
   In this section we restrict attention to circular cylinders in dimension $n=3$. 
   The main goal is the prove Theorem \ref{main}, restated below after a couple of propositions.

 \begin{proposition}\label{reflection}
If the pre-collision state $(a, u, U)$ of a general (not necessarily a cylinder) no-slip billiard system satisfies the rolling impact condition, then 
the post-collision state is given by
$$C_a(u,U)=\left(u-2 u\cdot \nu_a \nu_a, U\right).$$
In words, the center of mass velocity of the moving particle is reflected specularly and the angular velocity matrix $U$ remains the same.
 \end{proposition}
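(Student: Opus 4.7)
The plan is a direct verification, using Proposition \ref{no-slip map}, that the rolling impact hypothesis causes the collision map to reduce to specular reflection of $u$ with $U$ left untouched.

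First, I would unpack the rolling impact condition. By Definition \ref{transversal rolling def}, it says that the tangential component of $v=u-rU\nu_a$ vanishes. Since $U\in\mathfrak{so}(n)$ is skew-symmetric, $U\nu_a\cdot\nu_a=0$, so $rU\nu_a$ is already tangent to $\partial\mathcal{B}$ at $a$. Hence rolling impact is equivalent to the identity
\begin{equation*}
rU\nu_a \;=\; u - (u\cdot\nu_a)\,\nu_a,
\end{equation*}
i.e.\ $rU\nu_a$ equals the tangential part of $u$. This is the single algebraic fact I will plug into Proposition \ref{no-slip map}.

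Next, I would substitute this identity into the center of mass velocity component of $C_a(u,U)$:
\begin{equation*}
c_\beta u - \tfrac{s_\beta}{\gamma}(u\cdot\nu_a)\nu_a + s_\beta\gamma\,rU\nu_a
\;=\; (c_\beta+s_\beta\gamma)\,u - \bigl(\tfrac{s_\beta}{\gamma}+s_\beta\gamma\bigr)(u\cdot\nu_a)\nu_a.
\end{equation*}
Using $c_\beta=(1-\gamma^2)/(1+\gamma^2)$ and $s_\beta=2\gamma/(1+\gamma^2)$, the coefficients simplify to $c_\beta+s_\beta\gamma=1$ and $s_\beta/\gamma+s_\beta\gamma=2$, yielding $u-2(u\cdot\nu_a)\nu_a$, which is exactly specular reflection.

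For the angular velocity component, I would again substitute $U\nu_a=(u-(u\cdot\nu_a)\nu_a)/r$ to compute
\begin{equation*}
\nu_a\wedge U\nu_a \;=\; \tfrac{1}{r}\,\nu_a\wedge u - \tfrac{u\cdot\nu_a}{r}\,\nu_a\wedge\nu_a \;=\; \tfrac{1}{r}\,\nu_a\wedge u,
\end{equation*}
using the elementary property $\nu_a\wedge\nu_a=0$. Then the second component of $C_a(u,U)$ becomes
\begin{equation*}
\tfrac{s_\beta}{\gamma r}\,\nu_a\wedge u + U - \tfrac{s_\beta}{\gamma}\cdot\tfrac{1}{r}\,\nu_a\wedge u \;=\; U,
\end{equation*}
so $U$ is indeed unchanged. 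There is no real obstacle here; the only thing to be careful about is the correct interpretation of the rolling impact condition as forcing $rU\nu_a$ to coincide with the full tangential part of $u$, which relies on the skew-symmetry of $U$.
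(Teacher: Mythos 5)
Your proof is correct and follows essentially the same route as the paper's: plug the rolling impact identity $rU\nu_a=u-(u\cdot\nu_a)\nu_a$ into the collision map of Proposition \ref{no-slip map} and simplify using $c_\beta+\gamma s_\beta=1$ (equivalently $s_\beta/\gamma+s_\beta\gamma=2$) together with $\nu_a\wedge\nu_a=0$. Your explicit remark that skew-symmetry of $U$ makes $rU\nu_a$ automatically tangential is a small clarification the paper leaves implicit, but the argument is otherwise identical.
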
  
 \begin{proof}
 From the definition of the no-slip collision map $(u^+, U^+)=C_a(u^-,U^-)$,  the rolling impact condition
 $ rU^- = u-u\cdot \nu_a\nu_a$, and the relation $c_\beta +\gamma s_\beta = 1$
  we obtain 
 $$
  u^+ = c_\beta u^--\gamma^{-1}{s_\beta} u^-\cdot \nu_a\nu_a + \gamma s_\beta r U^- \nu_a=u^--2 u^-\cdot \nu_a \nu_a
$$
  and 
$$
 U^+=\frac{s_\beta}{\gamma r} \nu_a \wedge u^- + U^- -\frac{s_\beta}{\gamma r} \nu_a\wedge r U^- \nu_a= U^-,
$$
as claimed.
 \end{proof}
   
   Next we restate and prove Proposition \ref{equal times}, which gives a broader context to a property observed in \cite{CFII}. 
\newtheorem*{prop:equal times}{Proposition \ref{equal times}}
\begin{prop:equal times}
  Consider a two-dimensional no-slip billiard system in  a disc. If the first collision satisfies the rolling impact condition, then all subsequent collisions also do, and the times between consecutive  collisions are all equal. Furthermore, the center of mass of the moving particle undergoes specular reflection at each collision.
  \end{prop:equal times} 
  \begin{proof}
Let $a$ and $a'$ be consecutive  collision points on the boundary of $\mathcal{B}$. Let $(u^-,\omega^-)$ denote pre-collision linear and angular velocities at $a$ and $(u^+, \omega^+)$ the post-collision velocities at $a$. Notice that the latter are also the pre-collision velocities
at $a'$. 
 Suppose that the rolling impact condition holds at $a$. Then
as $\omega^-=\omega^+$, we have $$-r\omega^+ = -r\omega^-= u^-\cdot \tau_a = u^+\cdot \tau(a') $$
where the last equality is  due to the post-collision velocity $u^+$ at $a$ being the specular reflection of $u^-$. Therefore the
rolling impact condition also holds at $a'$. That  intercollision times are all equation  is a consequence of
Proposition \ref{reflection}.
  \end{proof}
  
\newtheorem*{thm:main}{Theorem \ref{main}}
\begin{thm:main}
Consider a no-slip billiard system in a circular cylinder in $\mathbb{R}^3$ whose moving particle is subject to a constant force directed along the axis of the cylinder. If the first collision satisfies the transversal rolling impact condition and the first flight segment does not go through the  axis of the cylinder, then the particle's trajectory is bounded. 
 \end{thm:main}
 \begin{proof}
 Reviewing some notation,
 $\mathcal{B}_0$ is here the cylinder of radius $R$ along $e=(0,0,1)^\dagger$  so that $\mathcal{B}$ is the cylinder of radius $R-r$
 along $e$.  
A  trajectory of the billiard system gives a sequence of post-collision states $(a_i, u_i, U_i)\in \mathcal{N}$, $i=0,1, \dots$, and
for this trajectory we  have  the unit normal vectors $\nu_i=\nu(a_i)= -\bar{a}_i/|\bar{a}_i|$ to $\partial \mathcal{B}$ where $\bar{a}=a-a\cdot e e$, the tangent vectors $\tau_i=\tau(a_i)= \nu_i\times e$ to $\partial \mathcal{B}$, the intercollision times 
$t_i$ between the $i$th and $i+1$st collisions, the longitudinal component of the center of mass velocities $\sigma_i= u_i\cdot e$,
the transversal angular velocity vectors $w_i = \gamma r \omega_i\times e = \gamma r U_ie$,
the position  $h_i = a_i\cdot e$ of the center of 
 the  moving particle  along the cylinder's axis, and  $\bar{u}_i=u_i-u_i\cdot e e$. 
  The stating point of the proof are the equations (and notations) recorded in Proposition \ref{longitudinal equations}. We specialize them to this situation by
 noting that the projection $\Pi_a$ appearing in the lower-right block of the matrix $\mathcal{A}(a)$ 
 may be written here as $\tau_a\tau_a^\dagger$.  Thus
 $$\Lambda_i =\left(\begin{array}{c}\sigma_i \\w_i\end{array}\right),  \ \ \mathcal{A}_i= \left(\begin{array}{cc}c & -s \nu_i^\dagger \\-s \nu_i & -c \nu_i \nu_i^\dagger + \tau_i \tau_i^\dagger\end{array}\right), \ \ \mathbbm{1}=\left(\begin{array}{c}1 \\0 \\0\end{array}\right).$$
The  $\Lambda_i$ are vectors in $\mathbb{R}^3$ and the $\mathcal{A}_i$ are $3\times 3$ matrices. 
 Further, the $\mathcal{A}_i$ are orthogonal matrices of determinant $-1$, as is easily checked. 
 With $\Phi=-g\mathbbm{1}$,  then by Proposition \ref{longitudinal equations},
  \begin{equation}
  \Lambda_i = \mathcal{A}_i \left(\Lambda_{i-1} + t_{i-1}\Phi\right).
  \end{equation}
  
  Let $a_0=(R-r,0,0)$ be the initial position of the particle's center of mass, $u_0$ its initial velocity, and $\bar{u}_0=u_0-u_0\cdot e e$ the cross-sectional projection.  We define $\theta$ as the angle between $\bar{u}_0$ and $\tau_0$, as in Figure \ref{initial}, and assume that $0<\theta<\pi/2$. 
  \begin{figure}[htbp]
\begin{center}
\includegraphics[width=2.5 in]{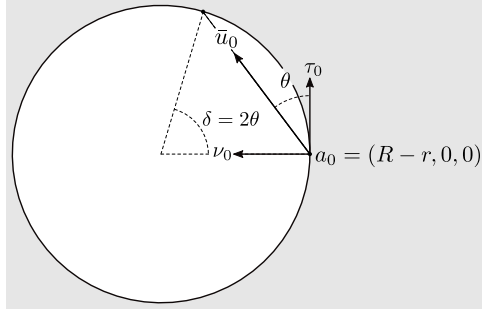}\ \ 
\caption{\small{Cross-sectional projection of initial velocity $\bar{u}_0$ and   definition of $\theta$ and $\delta$.}} 
\label{initial}
\end{center}
\end{figure}

  We also assume without further notice that the transversal rolling impact condition holds. 
 The free-flight times are all equal by Proposition \ref{equal times}; the common value is
 $$t_i = t = \frac{2(R-r)\tan \theta}{|\bar{u}_0|} $$
and $\nu_i= \mathcal{R}(\delta)\nu_{i-1}, \ \  \tau_i= \mathcal{R}(\delta)\tau_{i-1}$
 for $i\geq 1$, where
 $\mathcal{R}(\delta)=\left(\begin{array}{cr}\cos\delta & -\sin\delta  \\\sin\delta & \cos \delta\end{array}\right)$.
Observe that  small values of $\theta$ correspond to near grazing trajectories.

  Define the $3\times 3$ block diagonal matrix $\mathcal{R}=\text{diag}(1, \mathcal{R}(\delta))\in SO(3)$. 
  A simple matrix multiplication shows that $\mathcal{A}_i = \mathcal{R}\mathcal{A}_{i-1}\mathcal{R}^{-1}$ and
  $$\mathcal{A}_0= \left(\begin{array}{rrc}c & -s & 0 \\-s & c & 0 \\0 & 0 & 1\end{array}\right)$$
  where, we recall, $c$ and $s$ are here the cosine and sine of the angle $\beta$. In  terms of mass distribution parameter $\gamma$,  $c=(1-\gamma^2)/(1+\gamma^2)$ and
  $s=2\gamma/(1+\gamma^2)$. All this notation in place, we now have 
  \begin{equation}\label{fundamental}
  h_i= h_{i-1}+ t \mathbbm{1}^\dagger \left\{\Lambda_{i-1}+\frac{t}{2}\Phi\right\}, \ \ 
  \mathcal{A}_i=\mathcal{R}^i\mathcal{A}_0\mathcal{R}^{-i}, \ \ 
  \Lambda_i=\mathcal{A}_i\left\{\Lambda_{i-1}+t\Phi\right\}.
\end{equation}
  We wish to show that the sequence of $h_i$ obtained by  iterating these relations is bounded.
 From Equation (\ref{fundamental}) we obtain
 \begin{equation}\label{hi}
 h_\ell= h_0 - \frac{\ell t^2}{2}g + t\mathbbm{1}^\dagger\left\{\Lambda_0+ \cdots + \Lambda_{\ell-1}\right\}.
 \end{equation} 
  and 
  \begin{equation}\label{Lambdai}
  \Lambda_i= \mathcal{A}_i \cdots \mathcal{A}_1\Lambda_0 +t\left\{\mathcal{A}_i\cdots \mathcal{A}_1+
  \mathcal{A}_i\cdots \mathcal{A}_2+\cdots +\mathcal{A}_i \mathcal{A}_{i-1}+ \mathcal{A}_i \right\}\Phi.
  \end{equation}
 Define $\mathcal{M}=\mathcal{A}_0\mathcal{R}^{-1}$. We also obtain from Equation (\ref{fundamental}), for $j>i$,
 \begin{equation}\label{Si}
 \mathcal{A}_j\cdots \mathcal{A}_{i}=\mathcal{R}^j\mathcal{M}^{j-i+1}\mathcal{R}^{-i+1}.
 \end{equation}
  Equation (\ref{Lambdai}) yields 
  
  \begin{align}
\label{multiple}
\begin{split}
 \Lambda_0+\cdots +\Lambda_{i-1}&= \left\{I + \mathcal{A}_1 + \mathcal{A}_2\mathcal{A}_1+ \cdots + \mathcal{A}_{i-1}\cdots \mathcal{A}_{1}\right\}\Lambda_0\\
 &\ \ \ +t\left\{I\right.\\
 &\ \ \   +\mathcal{A}_1\\
 &\ \ \ +\mathcal{A}_2+ \mathcal{A}_2\mathcal{A}_1\\
 &\ \ \ + \mathcal{A}_3+ \mathcal{A}_3\mathcal{A}_2+ \mathcal{A}_3\mathcal{A}_2\mathcal{A}_1\\
 &\ \ \ \ \ \ \ \ \ \ \ \ \ \ \ \ \ \ \ \cdots\\
 &\ \ \ + \left.\mathcal{A}_{i-1}+ \mathcal{A}_{i-1}\mathcal{A}_{i-2}+\cdots+ \mathcal{A}_{i-1}\cdots \mathcal{A}_{i}\right\}\Phi.
\end{split}
\end{align}
  Since $\mathcal{R}\mathbbm{1}=\mathbbm{1}$, it follows from Equations (\ref{Si}) and (\ref{multiple})
that
\begin{align}\label{LambdaS}
\begin{split}
  \mathbbm{1}^\dagger \left\{\Lambda_0+\cdots+\Lambda_{\ell-1}\right\}&= \mathbbm{1}^\dagger\left\{I + \mathcal{M}+\mathcal{M}^2+\cdots +\mathcal{M}^{\ell-1}\right\}\Lambda_0\\
  &\ \ \ \ \ \ + t\mathbbm{1}^\dagger\left\{(\ell-1)\mathcal{M}+(\ell-2)\mathcal{M}^2+\cdots + 2\mathcal{M}^{\ell-2}+\mathcal{M}^{\ell-1}\right\} \Phi.
  \end{split}
  \end{align}

 It is  now necessary to better understand  $\mathcal{M}$.  This matrix is the product of two orthogonal matrices, hence orthogonal, with determinant $\det \mathcal{M}=(\det \mathcal{A}_0)(\det \mathcal{R}^{-1})=-1$.  It has the explicit form
 $$ \mathcal{M}=\left(\begin{array}{cc}c & -s\nu_1^\dagger \\-s \nu_0 & -c\nu_0\nu_1^\dagger+ \tau_0\tau_1^\dagger\end{array}\right).$$
  Under the assumption $0<\theta<\pi/2$, $\nu_1\neq \pm \nu_0$.   Consider the   orthonormal basis of $\mathbb{R}^3$ defined by the vectors:
  \begin{align*}
  e_0&=\frac{1}{\sqrt{\gamma^2(1+\nu_0\cdot \nu_1)^2+ 2(1+\nu_0\cdot\nu_1)}} \left(\begin{array}{c}\gamma(1+\nu_0\cdot\nu_1) \\\nu_0+\nu_1\end{array}\right)  \\
  e_1&=\frac{1}{\sqrt{2(1-\nu_0\cdot \nu_1)}}\left(\begin{array}{c} 0 \\  \nu_0-\nu_1\end{array}\right)\\
  e_2&=\frac{1}{\sqrt{4 + 2\gamma^2\left(1+\nu_0\cdot\nu_1\right)}}\left(\begin{array}{c}-2 \\\gamma(\nu_0+\nu_1)\end{array}\right)
  \end{align*}
  Then $e_0$ is an eigenvector of $\mathcal{M}$ associated to the eigenvalue $-1$ and the restriction of $\mathcal{M}$ to $e_0^\perp$
     is a planar rotation.  Relative to the basis $\{e_1, e_2\}$, this restriction has matrix form $\left(\begin{array}{cr}a & -b \\b & a\end{array}\right)$ where $a^2+b^2=1$ and
     $a= e_1\cdot\left(\mathcal{M}e_1\right),  b= e_2\cdot\left(\mathcal{M}e_1\right).$  Explicitly,
 \begin{align*}
  a&= 1-\frac{\gamma^2}{1+\gamma^2}\left(1-\nu_0\cdot \nu_1\right)\\
  b&=-\frac{\gamma}{1+\gamma^2}\sqrt{(1-\nu_0\cdot\nu_1)(2+\gamma^2(1+\nu_0\cdot\nu_1))}.
  \end{align*}

  Let $\Pi_-$ and $\Pi_\perp$ denote, respectively, the orthogonal projections from $\mathbb{R}^3$ to the line $\mathbb{R}e_0$ and the plane $e_0^\perp$, and write $\mathcal{M}_\perp$ for the restriction of $\mathcal{M}$ to $e_0^\perp$.  
  Notice that $\mathcal{M}_\perp$ cannot be the identity  (the equation 
  $\mathcal{M}e_1=e_1$ implies $1-\nu_0\cdot\nu_1=0$, which is not the case). 
As  $\mathcal{M}_\perp$ is a planar rotation, $I-\mathcal{M}_\perp$ is nonsingular and
  \begin{align}\label{S-perp}
  \begin{split}
\{\cdots\}_1:=  I + \mathcal{M}+\cdots+\mathcal{M}^{\ell-1}&= \Pi_\perp\left\{ I + \mathcal{M}+\cdots+\mathcal{M}^{\ell-1}\right\}+\Pi_-\left\{ I + \mathcal{M}+\cdots+\mathcal{M}^{\ell-1}\right\}\\
  &= \left\{I+\mathcal{M}_\perp+\cdots +\mathcal{M}_\perp^{\ell-1}\right\}\Pi_\perp+  \{1-1+\cdots +(-1)^{\ell-1}\}\Pi_-\\
  &=(I-\mathcal{M}_\perp)^{-1}\left(I-\mathcal{M}_\perp^\ell\right) \Pi_\perp- \begin{cases} \, 0 &\text{if } \ell =\text{odd}\\ \Pi_-&\text{if } \ell = \text{even.} \end{cases}
  \end{split}
  \end{align}
Notice that $\{\cdots\}_1$ is bounded. Next, consider the expression
$$\{\cdots\}_2:= (\ell-1)\mathcal{M}+(\ell-2)\mathcal{M}^2+\cdots+2\mathcal{M}^{\ell-2}+\mathcal{M}^{\ell-1}. $$
Then 
  \begin{equation}\label{braces2-}
  \Pi_-\{\cdots\}_2= \{-(\ell-1)+(\ell-2)-(\ell-3)+ \cdots + (-1)^{\ell-1}\}\Pi_-\\
  = -\left\lfloor \frac{\ell}{2}\right\rfloor\Pi_-
  \end{equation}
  where $\lfloor\cdot\rfloor$ denotes the floor function. We claim that 
  \begin{equation}\label{braces2perp}
  \Pi_\perp\{\cdots\}_2 = \left\{(\ell-1)(I-\mathcal{M}_\perp)^{-1}\mathcal{M}_\perp
  + (I-\mathcal{M}_\perp)^{-2}\mathcal{M}_\perp^2 \left(I-\mathcal{M}_\perp^{\ell-1}\right)
  \right\}\Pi_\perp
  \end{equation}
  This can be shown to hold as follows. Let $z$ denote a complex variable. Then
  \begin{align*}
  (\ell-1)z+(\ell-2)z^2+\cdots + 2z^{\ell-2}+ z^{\ell-1}&=-z^{\ell+1}\frac{d}{dz}\frac1z\left\{1+\frac1{z}+\cdots +\frac1{z^{\ell-2}}\right\}\\
  &=(\ell-1)\frac{z}{1-z} - \frac{z^2(1-z^{\ell-1})}{(1-z)^2}.
  \end{align*}
  Identifying $\mathbb{R}^2$ with $\mathbb{C}$ and $\mathcal{M}_\perp$ with multiplication by some $z=e^{i\lambda}$ gives the claimed identity.
 Consequently,
 \begin{align}\label{three terms}
 \begin{split}
\mathbbm{1}^\dagger\{\cdots\}_2\Phi&=\left\lfloor\frac{\ell}{2}\right\rfloor  g\mathbbm{1}^\dagger\Pi_-\mathbbm{1} +
-(\ell-1) g \mathbbm{1}^\dagger(I-\mathcal{M}_\perp)^{-1}\mathcal{M}_\perp\Pi_\perp\mathbbm{1}\\
&\ \ \ \ \ \ \ \ \ \  \ \ \ \ \ \ \ \ \ \ \ \ -
g\mathbbm{1}^\dagger
(I-\mathcal{M}_\perp)^{-2}\mathcal{M}_\perp^2\left(I-\mathcal{M}_\perp^{\ell-1}\right)
\Pi_\perp\mathbbm{1}.
\end{split}
 \end{align} 
  The third term on the right-hand side of Equation (\ref{three terms}) is bounded. The first term can be evaluated by noting that
  $$\mathbbm{1}^\dagger\Pi_-\mathbbm{1} = \left(e_0\cdot \mathbbm{1}\right)^2 = \frac{\gamma^2(1+\nu_0\cdot \nu_1)}{2 + \gamma^2(1+\nu_0\cdot\nu_1)}.$$
  Concerning the second term, first observe that $$\Pi_\perp \mathbbm{1}= \mathbbm{1}\cdot e_1 e_1 + \mathbbm{1}\cdot e_2 e_2=
   \mathbbm{1}\cdot e_2 e_2=-\frac{2}{\sqrt{4+2\gamma^2(1+\nu_0\cdot\nu_1)}}e_2$$
   so that 
  $$ \mathbbm{1}^\dagger(I-\mathcal{M}_\perp)^{-1}\mathcal{M}_\perp\Pi_\perp\mathbbm{1}=
  \frac{2}{2+\gamma^2(1+\nu_0\cdot\nu_1)} e_2^\dagger (I-\mathcal{M}_\perp)^{-1}\mathcal{M}_\perp\Pi_\perp e_2. $$ 
 Since the rotation group in dimension $2$ is commutative, the number
 $w^\dagger (I-\mathcal{M}_\perp)^{-1}\mathcal{M}_\perp\Pi_\perp w$ does not depend on the unit vector $w$. Therefore
\begin{align*} e_2^\dagger (I-\mathcal{M}_\perp)^{-1}\mathcal{M}_\perp\Pi_\perp e_2&=
 \left(\begin{array}{c}1 \\0\end{array}\right)^\dagger \left[\left(\begin{array}{cr}a & -b \\b & a\end{array}\right)
 \left(I - \left(\begin{array}{cr}a & -b \\b & a\end{array}\right)\right)^{-1}\right]\left(\begin{array}{c}1 \\0\end{array}\right)\\
 &= \left(\begin{array}{c}1 \\0\end{array}\right)^\dagger \left[
 \frac1{2(1-a)}\left(\begin{array}{cc}a-1 & -b \\b & a-1\end{array}\right)
 \right]\left(\begin{array}{c}1 \\0\end{array}\right)\\
 &=-\frac12.
 \end{align*}
 For concreteness, let us assume  $\ell=$ odd; the case when $\ell$ is even will differ only by a bounded term. For $\ell$ odd we obtain
\begin{equation}\label{braces2Phi} \mathbbm{1}^\dagger \{\cdots\}_2\Phi=   
 +\frac{\ell-1}{2}  g 
-g\mathbbm{1}^\dagger
(I-\mathcal{M}_\perp)^{-2}\mathcal{M}_\perp^2\left(I-\mathcal{M}_\perp^{\ell-1}\right)
\Pi_\perp\mathbbm{1}
\end{equation}
 Returning now to Equation (\ref{hi}),  and using the results so far contained in Equations (\ref{LambdaS}), (\ref{S-perp}) and (\ref{braces2Phi})
  we notice that the unbounded terms $-(\ell t^2 g/2)$ cancel out and we are left with
  $$h_\ell = h_0 + t\mathbbm{1}^\dagger (I-\mathcal{M}_\perp)^{-1}\left(I-\mathcal{M}_\perp^\ell\right)\Pi_\perp \Lambda_0 - {(t^2g/2)}\left\{   
  1 + 2\mathbbm{1}^\dagger(I-\mathcal{M}_\perp)^{-2} \left(\mathcal{M}_\perp^2-\mathcal{M}_\perp^{\ell+1}\right)\Pi_\perp \mathbbm{1}
  \right\}, $$
  which is bonded. This concludes the proof.
  \end{proof}

   \section{Forced motion between parallel planes}\label{sec:Forced motion between parallel planes}
  Here we consider  the billiard domain bounded by 
  two infinite parallel affine codimension-$1$   subspaces of $\mathbb{R}^n$. 
   Let $\nu$ denote the inward-pointing unit normal vector to one of the planes, so that $-\nu$ is the (inward pointing) normal vector for the other plane. Let $e$ be a unit vector perpendicular to $\nu$. We suppose that the billiard particle is subject to a constant force $-gm e$  
 and wish to study the motion of the particle's center of mass along a direction $e$.
  The next theorem, which is a restatement of Theorem \ref{two plates bounded}, asserts that this motion is bounded. 
 \newtheorem*{thm:two plates bounded}{Theorem \ref{two plates bounded}}
\begin{thm:two plates bounded}
 Consider a domain whose boundary consists of two parallel hyperplanes in $\mathbb{R}^n$, $n\geq 2$. Then a trajectory of the no-slip billiard system 
 whose initial center of mass velocity is not parallel to the hyperplanes 
  is bounded. Trajectories remain bounded if a constant force   is applied to the particle's center of mass along   any direction parallel to those hyperplanes.
  \end{thm:two plates bounded}
  \begin{proof}  
  This theorem admits a proof very similar to that of Theorem \ref{main}, but we give instead a more conceptual proof that makes use of a certain invariant quantity  that  we can  identify for the two planes system, but whose possible  counterpart for the circular cylinder is not yet apparent to us. 
  
  For any given set of  initial conditions, the time between two consecutive collisions is constant throughout the orbit;  we denote it by $t$.  As before, we let $a_j$ 
denote the position  of the center of mass of the moving particle at the $j$th collision with the boundary of the billiard domain. 
Due to Theorem \ref{first theorem},  the proof may be approached by induction:  we can focus on the motion in the plane spanned by $e$ and $\nu$ and then argue by induction that trajectories are bounded for the transverse billiard system on $e^\perp$.

Set $\omega_j:= w_j\cdot \nu$, where $w_j:= \gamma  rU_je$ and $U_j$ is the post-collision angular velocity matrix at step $j$.  
 Let the constant force be   $-gm e$, where $m$ is the particle's mass. 
 The component of $a_j$ in the direction $e$ is $h_j=a_j\cdot e$ and the component of the post-collision velocity $u_j$ in the direction  $e$ is
$\sigma_j:= u_j\cdot e$. Then
\begin{equation*}\label{h equation} h_\ell = h_{\ell-1} + t \sigma_{\ell-1} - \frac{t^2g}{2 }=h_0-  \frac{t^2 g}{2}\ell + t \sum_{j=1}^{\ell -1} \sigma_j.\end{equation*} 
It is also useful to introduce the {\em angular displacement} 
$$k_\ell = k_0+t \sum_{j =1}^{\ell -1} \omega_j.$$

The following observation is key: For the billiard domain between two parallel planes, possibly  with a transverse force, the ratio of  angular to linear displacements remains constant after an even number of collisions. In particular,
\begin{equation}\label{slope}\frac{\Delta k}{\Delta h}:=\frac{k_{j+2}-k_j}{h_{j+2}-h_j}=(-1)^j\gamma. \end{equation}
The proof of this claim is a calculation.  For any even $j$ we may reindex to $j=0$, and
$$\frac{\Delta k}{\Delta h}=\frac{t(\omega_0+\omega_1)}{t(\sigma_0+\sigma_1)-t^2g}=\frac{\omega_0+\omega_1}{\sigma_0+\sigma_1-tg}.$$
As before, we write $c_\beta=\cos\beta=\frac{1-\gamma^2}{1+\gamma^2}$ and $s_\beta=\sin\beta=\frac{2\gamma}{1+\gamma^2}.$ Due to Proposition \ref{longitudinal equations},
$$\sigma_0+\sigma_1 = \sigma_0+c_\beta(\sigma_0-tg)+s_\beta \omega_0, \ \ \omega_0+\omega_1 = \omega_0 +s_\beta(\sigma_0-tg) - c_\beta\omega_0.$$
Notice that $1-c_\beta= \gamma s_\beta$ and $1+c_\beta =\gamma^{-1}s_\beta$. Thus we arrive at
 $$ \frac{\Delta k}{\Delta h}=\frac{(1-c_\beta)\omega_0+s_\beta(\sigma_0-tg)}{(1+c_\beta)(\sigma_0-tg)+s_\beta\omega_0}= \gamma.$$
  If $j$ is odd, a similar calculation yields $\frac{\Delta k}{\Delta h}=-\gamma$. 
  It follows from this observation that 
  \begin{equation}\label{lines}k_{2n}-k_0=\gamma(h_{2n}-h_0), \ \ k_{2n+1}-k_1=-\gamma(h_{2n+1}-h_1).\end{equation}
   We will refer to these $(h,k)$ lines as the {\em lines of contact}. 
  The constraint obtained from the existence of the lines of contact, combined with conservation of energy,  bounds the orbits, as we now show.

Notice that the kinetic energy, expressed in terms of $\sigma$ and  the rescaled angular velocity $\omega$  is $\mathcal{K}=\frac12 m\left(\sigma^2+\omega^2\right)$.  Up to a common additive constant, the total energy  at step $j$ is
  $$ \mathcal{E}_j=\frac12m\left(\sigma_j^2+\omega_j^2\right)+mgh_j=E$$
  where $E$ is the constant value of the total energy. 
  Setting $\lambda = t^2g/2$, we have
\begin{equation}\label{hk} \left(h_{2n+1}-h_{2n}\right)^2+ \left(k_{2n+1}-k_{2n}\right)^2=\left( t\sigma_{2n}-\lambda  \right)^2 + \left(t\omega_{2n}\right)^2.\end{equation}
  The  linear relations given by Equations (\ref{lines}) yield
  $$k_{2n+1}-k_{2n}= -\gamma \left(h_{2n+1}+ h_{2n}+ c\right) $$
  where the constant $c$ only depends on  initial values.  The above energy equation  gives
  $$t^2\left(\sigma^2_{2n}+\omega_{2n}^2\right)= \frac{2t^2E}{m}- 4\lambda h_{2n}. $$
  Inserting the previous two equations into (\ref{hk}),
  $$ \left(h_{2n+1}-h_{2n}\right)^2+\gamma^2\left(h_{2n+1}+h_{2n} +c\right)^2+2\lambda\left(h_{2n+1}+h_{2n}\right)=\frac{2t^2E}{m}  +\lambda^2. $$
This is the equation of an ellipse in the $(h_{\text{\tiny odd}}, h_{\text{\tiny even}})$-plane. A similar   ellipse is the locus of  $(h_{\text{\tiny even}}, h_{\text{\tiny odd}})$. Therefore we
 can  conclude that the sequence $h_0, h_1, \dots$ is bounded. 
   \end{proof}
   
  \section{Final comments: chaotic billiards}\label{sec:final comments}
   The examples of no-slip billiards considered so far in this paper (transversal period $2$, parallel hyperplanes, circular cylinder) all 
  share the property that the associated transversal   systems have simple and well-understood behavior. If we were to look for 
  a notion of completely integrable no-slip billiard systems, these would be models to have in mind. 
  
 We wish now to consider   a  numerical example whose transversal dynamics can exhibit chaotic  behavior,  for which
 the problem of bounded orbits is likely to be much more challenging.

Let us revisit  the stadium cylinder, whose cross section was shown in Figure \ref{stadium}. The rolling motion, as already noted, is bounded, and has a typical quasi-periodic  character (see Figure \ref{stadium_curve}),  but the corresponding no-slip billiard behaves much differently. Here we focus on a transition from simple bounded motion to a more chaotic regime at a natural bifurcation point (see Figure \ref{Exp1}) as an illustration of how different these two types of dynamics (namely, rolling motion versus no-slip billiards) can be. 
 To better appreciate the changes to orbits due to changes in initial conditions, it is useful to resort to a visualization device that we have called in \cite{CFZ} a {\em velocity phase portrait.}  We give here a brief review of this simple, but helpful, tool. 

  \begin{figure}[htbp]   
\begin{center}
\includegraphics[width=5in]{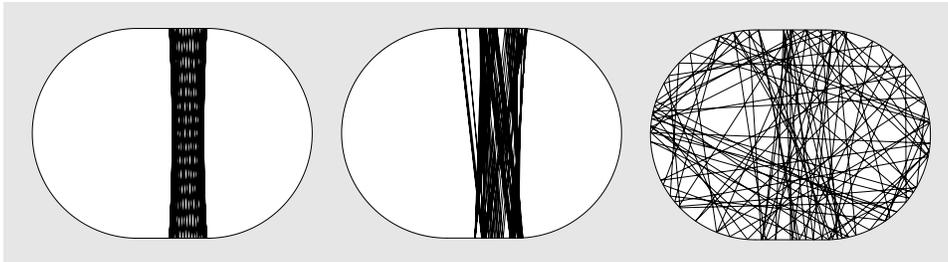}\ \ 
\caption{{\small Transition from regular to chaotic motion. The moving particle begins 
from the middle of the lower flat side with linear velocity pointing up and  a small angular velocity that causes it to move right after the first collision with the upper flat side.  For small values of the angular velocity  
trajectories never touch the curved sides of the boundary, and the motion along the axis of the cylinder is bounded. 
If the initial angular velocity is large enough,  trajectories move beyond the  ends of the flat sides 
and eventually becomes unstable. }}
\label{Exp1}
\end{center}
\end{figure}

For no-slip billiards in dimension $3$, the associated transverse  billiard system has a $3$-dimensional reduced phase space $\mathcal{N}$. (See the definition above  in (\ref{reduced phase space}).)
This space is the product of a $1$-dimensional manifold\----the boundary of the planar billiard domain\----and  a hemisphere in $\mathbb{R}^3$ representing the components of linear and angular velocities relevant to the transversal dynamics.

To make sense of this latter part,
   notice that the two components of the center of mass velocity and the single angular velocity of the planar billiard contribute two degrees of freedom due to conservation of kinetic energy. (The constant force in the vertical direction does not affect the transversal motion due to
Theorem \ref{first theorem}.) Vectors in this hemisphere are most conveniently expressed in the moving frame defined by the unit tangent vector $\tau_a$ to the boundary of the planar billiard domain at a given point $a$, the unit inward pointing normal vector $\nu_a$ at the same point, and a  third unit  vector
perpendicular to the first two representing a unit of angular velocity of the rotating  disc (rescaled by a factor   that turns the kinetic energy into a multiple of the ordinary Euclidean square norm in $\mathbb{R}^3$).
Using this moving frame,   $\nu_a$  at each $a$ is identified with $(0,0,1)$, and  each hemisphere  with the
points of the unit sphere $S^2$ having positive last coordinate. We further project this upper-hemisphere to the unit disc in 
$\mathbb{R}^2$. In this way we  have a bijection between   points in the unit disc and  (linear-angular) $3$-velocities at each boundary point of the planar billiard domain.

 \begin{figure}[htbp]   
\begin{center}
\includegraphics[width=5in]{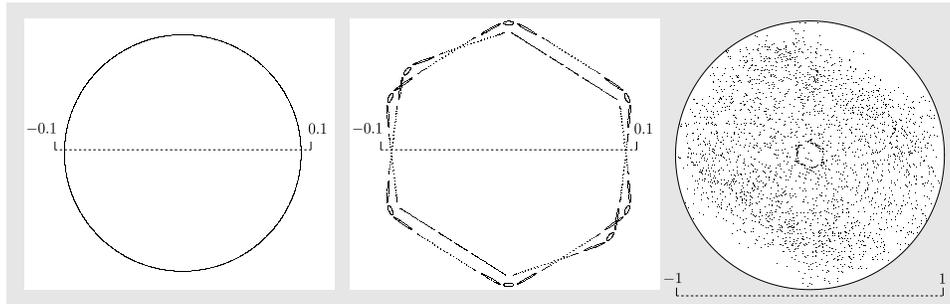}\ \ 
\caption{{\small  Transition from regular to chaotic motion for the transverse dynamics of the stadium cylinder no-slip billiard system, as viewed in the velocity phase portrait. The full velocity space is a disc of radius $1$ as shown in the far right. Initial conditions for the depicted orbits roughly compare to those of Figure \ref{Exp1}.}}
\label{VelPhPor}
\end{center}
\end{figure}

Three transversal orbit segments for the no-slip stadium-cylinder  billiard are shown in Figure \ref{Exp1}. In each, the 
particle begins at the bottom flat side with linear velocity pointing up, and a small angular velocity that causes it to reflect rightward upon first collision. All the other velocity components are set to $0$. When the angular velocity is sufficiently small, 
orbits are confined to the flat parts of the boundary and thus exhibit the bounded motion established in Theorem
\ref{two plates bounded}.

As the angular velocity increases, orbits eventually reach the curved parts, and soon transition to a chaotic regime in which a much larger region of the billiard phase space is explored, as suggested by the rightmost diagram of Figure \ref{Exp1}. 
Figure \ref{VelPhPor} shows what happens during that transition using  the velocity  phase portrait. The regular motion restricted to the flat sides of the boundary has the property that the linear-angular $3$-velocity vector  rotates in a simple fashion, forming  a small circle 
around the north pole of $S^2$. As the trajectory barely crosses into the curved parts of the boundary, the possible linear-angular $3$-velocity
still remains in a small neighborhood of the north pole, but begins to behave in more interesting ways that are very sensitive to the initial velocities. (See the middle diagram in Figure \ref{VelPhPor}.) As the angular velocity increases further, the linear-angular $3$-velocity 
spreads throughout the velocity phase portrait as shown by the rightmost diagram in Figure \ref{VelPhPor}.

The height function accordingly changes from simple bounded behavior (when the motion is limited to the flat boundary parts) 
to the rather more complicated motion over  much wider distances shown in Figure \ref{Zplot0_18}. 
This height function is likely not bounded; in fact,
the graph in Figure \ref{Zplot0_18} suggests a type of ``null-recurrent'' behavior as in one-dimensional  random walks. 
Notice the short periods of fast falling and bouncing back up, separated by
rough plateaux distributed in a seemingly random fashion. We believe that trying to establish limit theorems for the longitudinal motion of chaotic
transverse no-slip billiards in cylinders is a potentially fruitful direction to pursue.

 \begin{figure}[htbp]   
\begin{center}
\includegraphics[width=4.0in]{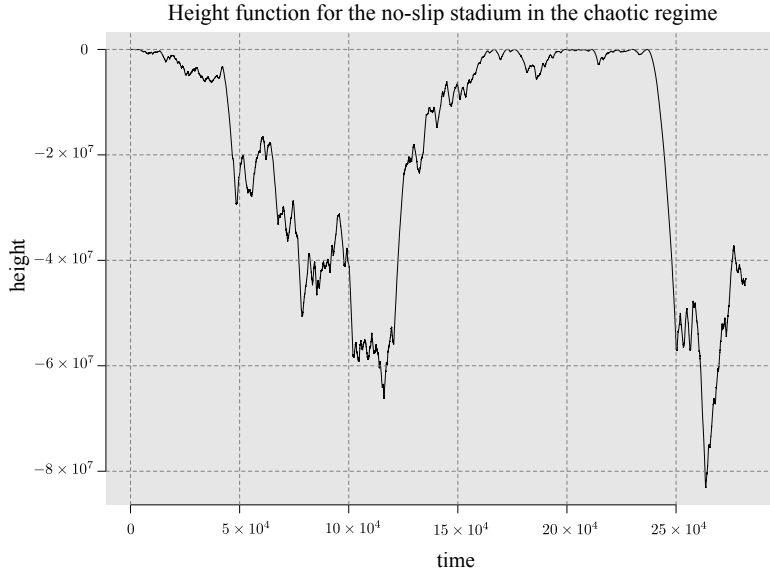}\ \ 
\caption{{\small Height function for the stadium cylinder for a trajectory in the chaotic regime, corresponding to the short orbit segment on the right of Figure \ref{Exp1} and of Figure \ref{VelPhPor}. To give a sense of the scales involved, the diameter of the stadium is $8$ and velocities are of order $1$. The number of time steps is $4\times 10^4$.  }}
\label{Zplot0_18}
\end{center}
\end{figure}


\begin{thebibliography}{Abcdef}

\bibitem{bloch} A.M. Bloch, {\em Nonholonomic Mechanics and Control}. Springer, Interdisciplinary Applied Mathematics 24, second edition, 2015.

\bibitem{borisov} A.V. Borisov, I.S. Mamaev, A.A. Kilin, {\em The Rolling Motion of a Ball on a Surface. New Integrals and Hierarchy of Dynamics.}
Regular and Chaotic Dynamics, V. 7,  N. 2, 2002.

\bibitem{gutkin} D.S. Broomhead,  E. Gutkin, {\em The dynamics of billiards with no-slip collisions}. Physica D 67 (1993) 188-197.


\bibitem{chernov} N. Chernov, R. Markarian, {\em Chaotic billiards}. Mathematical Surveys and Monographs, V. 127, American Mathematical Society,  2006.


\bibitem{CF} C. Cox, R. Feres, {\em Differential geometry of rigid bodies collisions and non-standard billiards}. Discrete and Continuous Dynamical Systems-A, 33 (2016) no. 11, 6065--6099.  

\bibitem{CFII} C. Cox\ and\ R. Feres, {\em No-slip billiards in dimension two}, in Dynamical systems, ergodic theory, and probability: in memory of Kolya Chernov, 91--110, Contemp. Math., 698, Amer. Math. Soc., Providence, RI, 2017. 

\bibitem{CFZ} C. Cox, R. Feres, H.K. Zhang {\em Stability of periodic orbits in no-slip billiards}, to appear in Nonlinearity.

\bibitem{Garwin} R.L. Garwin, {\em Kinematics of an Ultraelastic Rough Ball}. American Journal of Physics, V. 37, N. 1 January 1969, pages 88-92.


\bibitem{gualtieri} M. Gualtieri, T. Tokieda, L. Advis-Gaete, B. Carry, E. Reffet, and C. Guthmann, {\em Golfer's dilemma}.   Am. J. Phys. \textbf{74} (6), June 2006.

\bibitem{LLM} H. Larralde, F. Leyvraz, and C. Mej\'ia-Monasterio, {\em Transport properties of a modified Lorentz gas}. J. Stat. Phys. 113, 197-231 (2003).


\bibitem{lee} J.M. Lee, {\em Introduction to Smooth Manifolds}. Graduate Texts in Mathematics 218, Springer, 2003.

\bibitem{MLL} C. Mej\'ia-Monasterio, H. Larralde, and F. Leyvraz, {\em Coupled Normal Heat and Matter Transport in a Simple Model System}, Physical Review Letters, V. 86, N. 24, 5417-5420 (2001).


\bibitem{Neumark} Yu. I. Neumark, N.A. Fufayev, {\em Dynamics of Nonholonomic Systems}, Chap. III, Sec. 2,  American Mathematical Society, 1972. 

\bibitem{routh} E.J. Routh, {\em The Advance Part of a Treatise on the Dynamics of a System of
Ridid Bodies}. Macmillan and Co. 1884; Reprinted from the 6th ed. by Dover, New York, 1955.
\bibitem{serge} S. Tabachnikov, {\em Billiards}, in Panoramas et Synth\`eses  1, Soci\'et\'e Math\'ematique de France, 1995.



\bibitem{W} M. Wojtkowski, {\em The system of two spinning disks in the torus}. Physica D 71 (1994) 430-439.

\end{thebibliography}
\end{document}